\newtheorem{teo}{Theorem}[subsection]
\newtheorem{theorem}[teo]{Theorem}
\newtheorem*{thm*}{Theorem}
\newtheorem{cor}[teo]{Corollary}
\newtheorem{corollary}[teo]{Corollary}
\newtheorem{lema}[teo]{Lemma}
\newtheorem{lemma}[teo]{Lemma}
\newtheorem{prop}[teo]{Proposition}
\newtheorem*{claim*}{Claim}
\theoremstyle{definition}
\newtheorem{definition}[teo]{Definition}
\newtheorem{example}[teo]{Example}
\theoremstyle{remark}
\newtheorem{remark}{Remark}
\numberwithin{figure}{section}
\newcommand{\Id}{\mathrm{id}} 
\newcommand{\Imm}{\mathrm{Im}}
\newcommand{\rank}{\mathrm{rank \,}}
\newcommand{\co}{\colon}
\newcommand{\R}{\mathbb{R}}
\newcommand{\Z}{\mathbb{Z}}
\newcommand{\cat}{\mathrm{cat}}
\newcommand{\crit}{\mathrm{crit}}
\newcommand{\sd}{\mathrm{sd}}
\newcommand{\HCcat}{\mathrm{hccat}}
\begin{document}
\title[Morse-Bott Theory on posets]{Morse-Bott Theory on posets and an homological Lusternik-Schnirelmann Theorem}
\thanks{The first and the fourth authors were partially supported by MINECO Spain Research Project MTM2015--65397--P and Junta de Andaluc\'ia Research Groups FQM--326 and FQM--189. The second author was partially supported by MINECO-FEDER research project MTM2016--78647--P. The third author was partly supported by Ministerio de Ciencia, Innovaci\'on y Universidades,  grant FPU17/03443. The second and third authors were partially supported by Xunta de Galicia ED431C
	2019/10 with FEDER funds.}
\author[Fern\'{a}ndez-Ternero, Mac\'{i}as-Virg\'os, Mosquera-Lois, Vilches]{D. Fern\'{a}ndez-Ternero$^{(1)}$, E. Mac\'{i}as-Virg\'os$^{(2)}$, \\
D. Mosquera-Lois$^{(2)}$, J.A. Vilches$^{(1)}$}
\address{$^{(1)}$Departamento de Geometr\'ia y Topolog\'ia, Universidad de Sevilla, Spain. 
	\\
$^{(2)}$ Instituto de Matemáticas, Universidade de Santiago de Compostela, Spain.}
\email{desamfer@us.es, quique.macias@usc.es, david.mosquera.lois@usc.es, vilches@us.es}

\begin{abstract} We develop Morse-Bott theory on posets, generalizing both discrete Morse-Bott theory for regular complexes and Morse theory on posets. Moreover, we prove a Lusternik-Schnirelmann theorem for general matchings on posets, in particular, for Morse-Bott functions.
\end{abstract}

\subjclass[2010]{
Primary: 57R70 
Secondary:  37B35
}

\maketitle
\setcounter{tocdepth}{1}
\tableofcontents

\section{Introduction}\label{INTRO}

 Since its introduction, Morse Theory has been an active field of research and connections with many different areas of Mathematics have been found. That interaction has led to several adaptations of Morse Theory to different contexts, for example: PL versions by Banchoff \cite{Banchoff,Banchoff2} and by Bestvina and Brady \cite{Bestvina} and a purely combinatorial approach by Forman \cite{Forman,Forman2}. Nowadays, not only pure mathematics benefit from that interaction, but also applied mathematics \cite{Ghristbook} due to the importance of discrete settings.

Roughly speaking, Morse Theory addresses the study of the topology (homology, originally) of a space by breaking it into ``elementary'' pieces. That is achieved by the so called Fundamental or Structural Theorems of Morse Theory, which assert that the object of study (for example a smooth manifold or a simplicial complex) has the homotopy type of a CW-complex with a given cell structure determined by the criticality of a Morse function defined on it \cite{Forman2,Milnor}.

Originally Morse Theory began with the definition of Morse function itself, that is, a smooth function with non-degenerate critical points so, the only  critical objects allowed were points. That was overcame by Morse-Bott theory \cite{Bott2}, which broadened the class of critical objects by including non-degenerate critical submanifolds. To avoid the critical objects must be non-degenerate, this led to the introduction of Lusternik-Schnirelmann theory \cite{LUSTERNIK_viejo}.

In the context of Morse Theory, Morse inequalities guarantee that the number of critical points of a Morse function $f\co X\to \R$ is an upper bound for the homological complexity of the space $X$. The role of the Morse inequalities in the setting of Lusternik-Schnirelmann theory is played by the so called Lusternik-Schnirelmann theorem, which asserts that the weighted sum of the number of critical objects is an upper bound for the category of the space \cite{LUSTERNIK_viejo}.

Recent works have shown that it is possible to approach important problems regarding posets by using topological methods. See for example Barmak and Minian's work on the realizability of groups as the automorphism groups of certain posets \cite{BARMAK_posets} or Stong's work on groups on the way the homotopy type of the poset of  non-trivial $p$-subgroups  ordered by inclusion determines algebraic properties of the group  \cite{Stong_groups}.  Moreover, it is expected that recent discrete analogues of some classic concepts from differential topology shed light on their originals counterparts \cite{Forman_6}. Therefore, it makes sense to study the topology of finite spaces by means of some adapted version of Morse theory to this context.

An invariant of a space $X$ is the smallest number of open sets that cover $X$ and that satisfy certain properties, such as: being elementary in a certain sense, for instance acyclic or contractible (see \cite{Fox,Larranhaga} for more examples). More generally, and analogously, a categorical invariant of an object $X$ (such as a simplicial complex) can be defined as the smallest number of subobjects needed to cover $X$ and that verify certain properties (see for example \cite{QuiMos2,SamQuiErJa,SamQuiErJa2,Tanaka3}). In vague terms, an invariant provides a certain measure for the complexity of an object. For example, the Lusternik-Schnirelmann category measures, in a particular manner, how far is a space from being contractible. 

This work addresses two aims. First, to develop Morse-Bott theory in the context of finite spaces, generalizing both Morse theory for posets introduced by Minian \cite{Minian} and discrete Morse-Bott theory \cite{Forman3}. In particular, we prove an integration result for matchings, the Fundamental Theorems of Morse-Bott theory in this setting and several generalization of Morse inequalities for arbitrary matchings. Second, we introduce an homological category and we prove the Lusternik-Schnirelmann theorem. 

We describe the main motivations for the latter goal.  First, the absence of a discrete Lusternik-Schnirelmann theorem for arbitrary matchings, not even in the simplicial setting, and second, the gap in the literature even for a Lusternik-Schnirelmann theorem for Morse (acyclic) matchings. Nevertheless, several attempts were made. On the one side, a subset of the authors in joint work with Scoville proved a Lusternik-Schnirelmann theorem for a notion of simplicial  category and acyclic matchings in the context of simplicial complexes \cite{SamQuiNiJa}. However, in order to do so, they developed another notion of criticality which leads to a different and non-equivalent definition of discrete Morse function. On the other side, first Scoville and Aaronson \cite{Scoville}, then Tanaka \cite{Tanaka_LS_thm}, and afterwards Knudson and Johnson \cite{Knudson}, approached the task by defining another categorical invariant but keeping the usual definition of discrete Morse function. 

The organization of the paper is as follows.  In Section \ref{sec:preliminaries} we recall some definitions and standard results about posets, finite topological spaces and regular complexes. Section \ref{sec:matchings_morse_bott_functions} is devoted to the study of Morse-Bott theory in the context of posets. In Section \ref{sec:fundamental_theorems_and_consequences} we prove the Fundamental Theorems of Morse-Bott Theory in this setting and exploit some of their consequences. Finally, in Section \ref{sec:homological_lusternik-schnirelmann_theorem} we introduce a new notion of homological category and prove the corresponding Lusternik-Schnirelmann theorem for general matchings.


\section{Finite Spaces, Posets and Simplicial Complexes} \label{sec:preliminaries}

This section is devoted to introduce the objects we will work with. Most of the material is well established in the literature, for further details or proofs the reader is referred to \cite{Barmak_book,BarMin12,Bloch,Farmer,SamQuiDaSVil,Minian,Wachs}.

\subsection{Finite spaces and posets}
It is well known  that finite posets and finite $T_0$-spaces are in bijective correspondence.  If $(X, \leq)$ is a poset, a topology $\mathcal{T}$ on $X$ is given by taking the sets $$U_x:=\{y\in X: y\leq x\}$$  as a basis.   On the other hand, if $X$ is a finite $T_0$-space, define for each $x\in X$ the {\it minimal open set} $U_x$ as the intersection of all open sets containing $x$.  Then $X$ may be given an order by defining $y\leq x$ if and only if $U_y\subset U_x$. It is easy to see that  these correspondences are mutual inverses of each other. Moreover a map between posets $f\co X\to Y$ is order preserving if and only if it is continuous when considered as a map between the associated finite spaces.  All posets will be assumed to be finite and by finite space we will mean $T_0$-space. We will use the notion of finite ($T_0$-)space and poset interchangeably. 

We need to introduce some terminology.

\begin{definition}
	A {\it chain} in a poset $X$ is a subset $C\subseteq X$ such that if $x,y\in C$, then either $x\leq y$ or $y\leq x$.     
\end{definition}

\begin{definition}
	The {\it height} of a poset $X$ is the maximum length of the chains in $X$, where the chain $x_0<x_1<\ldots <x_n$ has length $n$. The height $h(x)$ of an element $x\in X$ is the height of $U_x$ with the induced order.
\end{definition}

\begin{definition}
	A poset $X$ is said to be {\it homogeneous} of degree $n$ if all maximal chains in $X$ have length $n$. A poset is {\it graded} if $U_x$ is homogeneous for every $x\in X$. In that case, the {\em degree} of $x$, denoted by $\deg(x)$, is its height.
\end{definition}

We will denote both the height and degree of an element by superscripts, for example $x^{(p)}$.

Let $X$ be a finite poset, $x,y\in X$. If $x< y$ and there is no $z\in X$ such that $x< z< y$, we write $x\prec y$.

For $x\in X$ we also define $\widehat{U}_x:=\{w\in X\colon w<x\}$ as well as $F_x:=\{y\in X : y\geq x\}$ and $\widehat{F}_x:=\{y\in X : y>x\}$.

\subsection{The McCord functors}
We now recall McCord functors between posets and simplicial complexes \cite{McCord}. Given a poset $X$, we define its {\it order complex} $\mathcal{K}(X)$ as the simplicial complex whose $k$-simplices are the non-empty $k$-chains of $X$. Furthermore, given an order preserving map $f\co X\to Y$ between posets, we define the simplicial map $\mathcal{K}(f)\co \mathcal{K}(X) \to \mathcal{K}(Y)$ given by $\mathcal{K}(f)(x)=f(x)$.

Conversely, if $K$ is a simplicial complex, we define the face poset of $K$, $\Delta(K)$, as the poset of simplices of $K$ ordered by inclusion. Given a simplicial map $\phi\co K \to L $ we define the order preserving map $\Delta(\phi)\co \Delta(K) \to \Delta(L)$ given by $\Delta(\phi)(\sigma)=\phi(\sigma)$ for each simplex $\sigma$ of $K$. 

The face poset functor can be defined analogously for regular CW-complexes. That is, given a regular CW-complex $K$, $\Delta(K)$ is the poset of cells of $K$ ordered by inclusion. Given a cellular map $\phi\co K \to L $ we define the order preserving map $\Delta(\phi)\co \Delta(K) \to \Delta(L)$ given by $\Delta(\phi)(\sigma)=\phi(\sigma)$ for each cell $\sigma$ of $K$.

Note that for the simplicial complex $K$, $\mathcal{K}\Delta(K)$ is $\sd(K)$, the first barycentric subdivision of $K$. By analogy, the first subdivision of a finite poset $X$ is defined as $\Delta\mathcal{K}(X)$. 
  
  \begin{theorem}\label{thm:mccords_thms}
  	The following statements hold:
  	\begin{enumerate}
  		\item Let $X$ be a finite $T_0$-space. Then there is a map $\mu_X\co |\mathcal{K}(X)|\to X$ which is a weak homotopy equivalence.
  		\item Let $K$ be a simplicial complex. Then there is a map $\mu_K\co |K|\to \Delta(K)$ which is a weak homotopy equivalence.
  	\end{enumerate}
  \end{theorem}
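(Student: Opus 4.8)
The plan is to prove part (1) directly and then deduce part (2) from it using the identity $\mathcal{K}\Delta(K)=\sd(K)$ recorded above. For part (1) I would first define $\mu_X$ as the unique map that is affine on each closed simplex of $|\mathcal{K}(X)|$ and sends a point $\alpha$ lying in the open simplex spanned by a chain $x_0<x_1<\cdots<x_n$ to the minimum of its support, $\mu_X(\alpha)=x_0$. Continuity is the first thing to check: since the sets $U_x$ form a basis of $X$, it suffices to see that each $\mu_X^{-1}(U_x)$ is open, and this holds because $\mu_X^{-1}(U_x)=\{\alpha\co \min(\sop\,\alpha)\le x\}$ is a union of open simplices closed under passing to cofaces (enlarging a chain can only lower its minimum), which is exactly the condition for a union of open simplices to be open in the realization.

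The key tool is McCord's criterion: a continuous map $f\co Y\to Z$ is a weak homotopy equivalence provided there is a basis $\mathcal{B}$ of $Z$ such that each restriction $f\co f^{-1}(B)\to B$ is a weak homotopy equivalence for $B\in\mathcal{B}$. I would apply this with $Z=X$, $f=\mu_X$ and the basis $\mathcal{B}=\{U_x\co x\in X\}$. Two contractibility computations then finish the argument. On the one hand $U_x$ has $x$ as a maximum, and a finite space with a maximum is contractible (the constant map at $x$ is comparable to the identity in the pointwise order, which yields the homotopy), so $U_x$ is weakly contractible. On the other hand $\mu_X^{-1}(U_x)$ is contractible: it deformation retracts onto the closed subcomplex $|\mathcal{K}(U_x)|$ by linearly pushing the mass off the vertices $v\not\le x$ onto the face spanned by the vertices lying in $U_x$, which is nonempty precisely because $\min(\sop\,\alpha)\le x$; and $|\mathcal{K}(U_x)|$ is in turn a cone with apex $x$, hence contractible. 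Therefore $\mu_X\co\mu_X^{-1}(U_x)\to U_x$ is a map between weakly contractible spaces, so a weak homotopy equivalence, and McCord's criterion gives that $\mu_X$ itself is one.

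For part (2) I would avoid a second direct construction and instead specialize part (1) to the poset $X=\Delta(K)$. This produces a weak homotopy equivalence $\mu_{\Delta(K)}\co|\mathcal{K}(\Delta(K))|\to\Delta(K)$. Since $\mathcal{K}\Delta(K)=\sd(K)$ and the canonical affine identification $|K|\cong|\sd(K)|$ is a homeomorphism, precomposing yields the desired weak homotopy equivalence $\mu_K\co|K|\to\Delta(K)$.

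The main obstacle is McCord's criterion itself, which I have used as a black box: proving that a weak homotopy equivalence can be detected over a basis is the genuinely technical point. It does not follow from the elementary manipulations above; it rests on the fact that maps of spheres and disks into $Z$ have compact image, so one may reduce to finite subfamilies of the cover $\{f^{-1}(B)\}$ and argue by an induction that assembles the local homotopies (a gluing, Mayer--Vietoris type argument, or equivalently the theory of basis-like open covers). Everything else --- the definition of $\mu_X$, its continuity, and the two contractibility statements --- is routine once this criterion is in hand, and the passage from (1) to (2) is purely formal.
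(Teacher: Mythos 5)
The paper does not prove this theorem: it is quoted as background and the reader is referred to Barmak's book, and your argument is exactly the standard McCord proof found there (the min-of-support map, continuity via coface-closed unions of open simplices, contractibility of $U_x$ and of $\mu_X^{-1}(U_x)$, McCord's basis-like open cover criterion, and the identification $\mathcal{K}\Delta(K)=\sd(K)$ for part (2)). Every step checks out, and the one genuinely nontrivial input --- the criterion that a map which restricts to a weak homotopy equivalence over each member of a basis-like cover is itself a weak homotopy equivalence --- is correctly isolated as the point that must be imported or proved separately to make the argument self-contained.
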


The maps $\mu_X\co |\mathcal{K}(X)|\to X$ and $\mu_K\co |K|\to \Delta(K)$ will be referred as McCord maps. For details and a proof of the result above see \cite{Barmak_book}.

\subsection{Cellular poset homology} We shall consider a special kind of posets called cellular. They were first introduced by Farmer \cite{Farmer} and then recovered by Minian \cite{Minian}. Farmer's definition is more general while Minian's one  is more adequate for our purposes. That is why we present the latter one.  

\begin{definition}[\cite{Minian}]
	The poset $X$ is {\it cellular}  if it is graded and for every $x\in X$, $\widehat{U}_{x}$ has the homology of a $(p-1)$-sphere, where $p$ is the degree of $x$. 
\end{definition}

Let $X$ be a cellular poset. We denote by $H_*(X)$ the singular homology of $X$. Unless stated otherwise, homology will be considered with integers coefficients. However, the constructions work as well for homology modules with coefficients in any principal ideal domain. We recall the construction due to Farmer \cite{Farmer} and Minian \cite{Minian} of a ``cellular homology theory'' for cellular posets. 

\begin{definition}
	Given a finite graded poset $X$, we define $X^{(p)}$ as the subposet of elements of degree less or equal to $p$, i.e. $$X^{(p)}=\{x\in X\colon \deg(x)\leq p\}.$$ 
\end{definition}

Given the cellular poset $X$, there is a natural filtration by the degree $$X^{(0)}\subset X^{(1)}\subset \cdots X^{(n)}=X$$ which allows to define a {\it cellular chain complex} $(C_*,d)$ as follows: $$C_p(X)=H_p(X^{(p)},X^{(p-1)})=\bigoplus_{\deg(x)=p}H_{p-1}(\widehat{U}_x),$$
which is a free abelian group with one generator for each element of $X$ of degree $p$. The differential $d\colon C_p(X)\to C_{p-1}(X)$ is defined as the composition
$$\begin{tikzcd}
H_p(X^{(p)},X^{(p-1)}) \arrow[r, "\partial"] & H_{p-1}(X^{(p-1)}) \arrow[r, "j"] & H_{p-1}(X^{(p-1)},X^{(p-2)})
\end{tikzcd}$$  
where $j$ is the canonical map induced by the inclusion and $\partial$ is the conecting homomorphism coming from the long exact sequence associated to the pair $(X^{(p)},X^{(p-1)})$. It can be shown \cite{Minian} that the differential 
$$d\colon C_p(X)\to C_{p-1}(X)$$
 can be written as $d(x)=\sum_{w\prec x}\epsilon(x,w)w$ where the incidence number $\epsilon(x,w)$ is the degree of the map 
 $$\widetilde{\partial}\colon \Z =H_{p-1}(\widehat{U}_x)\to H_{p-2}(\widehat{U}_w)=\Z.$$
which coincides with the connecting morphism of the Mayer-Vietoris sequence associated to the covering $\widehat{U}_x=(\widehat{U}_x-\{w\})\cup U_w$ \cite{Minian}.

\begin{theorem}[{\cite[Theorem 3.7]{Minian}}] \label{thm:cellular_homology_isomorphis_homology_cellular_posets}
	Let $X$ be a cellular poset, then $$H_*(C_*(X))\cong H_*(X).$$
\end{theorem}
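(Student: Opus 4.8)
The plan is to adapt, almost verbatim, the classical argument that the cellular chain complex of a CW-complex computes its singular homology, using the degree filtration $X^{(0)}\subset X^{(1)}\subset\cdots\subset X^{(n)}=X$ in place of the skeletal filtration. The whole proof rests on one structural input, which I will call the dimension lemma: for every $p$ and every $q$,
\[
H_q\big(X^{(p)},X^{(p-1)}\big)\cong\bigoplus_{\deg(x)=p}\widetilde{H}_{q-1}(\widehat{U}_x).
\]
By the cellularity hypothesis (each $\widehat{U}_x$ has the homology of $S^{p-1}$) this group is free abelian with one generator per degree-$p$ element when $q=p$, and vanishes for $q\neq p$. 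The case $q=p$ is exactly the identification already recorded when $C_p(X)$ was defined; the new content I need is the vanishing in the other degrees, which comes from the same computation and makes the suspension/reduced version natural.

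To prove the dimension lemma I would pass to order complexes via the McCord maps of Theorem~\ref{thm:mccords_thms}. Since singular homology depends only on weak homotopy type and these maps are natural, they assemble into a map of pairs $(|\mathcal{K}(X^{(p)})|,|\mathcal{K}(X^{(p-1)})|)\to(X^{(p)},X^{(p-1)})$ that is a weak equivalence on each space, so by the five lemma it induces an isomorphism on relative homology. Now $\mathcal{K}(X^{(p-1)})$ is the full subcomplex of $\mathcal{K}(X^{(p)})$ spanned by the vertices of degree at most $p-1$; and because the degree-$p$ elements are maximal and form an antichain, every simplex of $\mathcal{K}(X^{(p)})$ outside $\mathcal{K}(X^{(p-1)})$ contains exactly one such element $x$ and lies in the cone $x*\mathcal{K}(\widehat{U}_x)$. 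Hence $\mathcal{K}(X^{(p)})$ is obtained from $\mathcal{K}(X^{(p-1)})$ by coning off each $\mathcal{K}(\widehat{U}_x)$, the inclusion is a good pair, and the quotient is the wedge of suspensions $\bigvee_{\deg(x)=p}\big(x*\mathcal{K}(\widehat{U}_x)\big)/\mathcal{K}(\widehat{U}_x)$. Computing reduced homology of this quotient gives the displayed formula. I expect this to be the main obstacle of the whole proof: carefully justifying the cone decomposition and the good-pair property is the only genuinely geometric step.

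With the dimension lemma in hand the rest is a formal diagram chase. An induction on $p$ through the long exact sequence of $(X^{(p)},X^{(p-1)})$ gives $H_q(X^{(p)})=0$ for $q>p$, and, the filtration being finite, the same sequences yield $H_q(X^{(m)})\cong H_q(X)$ for all $m>q$ (the relevant relative groups sit in the wrong degree and vanish). From the long exact sequence of $(X^{(p)},X^{(p-1)})$ together with $H_p(X^{(p-1)})=0$ one deduces that $i_*\colon H_p(X^{(p)})\hookrightarrow C_p(X)$ is injective, and likewise $j\colon H_{p-1}(X^{(p-1)})\hookrightarrow C_{p-1}(X)$ is injective; since $d=j\circ\partial$, this gives $\ker d=\ker\partial=\Imm(i_*)\cong H_p(X^{(p)})$.

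Finally, observing that the map $H_p(X^{(p)})\to C_p(X)$ occurring in $d_{p+1}$ is the same $i_*$, the boundaries satisfy $\Imm d_{p+1}=i_*\big(\Imm\partial_{p+1}\big)$, whence
\[
H_p\big(C_*(X)\big)=\ker d_p/\Imm d_{p+1}\cong H_p(X^{(p)})\big/\Imm\big(\partial_{p+1}\colon C_{p+1}(X)\to H_p(X^{(p)})\big).
\]
The long exact sequence of $(X^{(p+1)},X^{(p)})$, using $H_p(X^{(p+1)},X^{(p)})=0$, identifies this quotient with $H_p(X^{(p+1)})$, which the stabilization above identifies with $H_p(X)$. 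This yields $H_*(C_*(X))\cong H_*(X)$. The chase is routine; all the difficulty is concentrated in the excision/cone decomposition of the relative groups of the degree filtration.
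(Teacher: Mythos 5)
The paper does not actually prove this statement---it is imported verbatim from Minian's \cite[Theorem 3.7]{Minian}---but your argument is correct and is essentially the proof given in that reference: the degree filtration plays the role of the skeletal filtration, the relative groups $H_q(X^{(p)},X^{(p-1)})$ are computed by passing to order complexes via the naturality of the McCord maps together with the cone decomposition $\mathcal{K}(X^{(p)})=\mathcal{K}(X^{(p-1)})\cup\bigcup_{\deg(x)=p}\,x*\mathcal{K}(\widehat{U}_x)$ (your justification that degree-$p$ elements are maximal and pairwise incomparable, so each new simplex lies in exactly one cone, is the right one), and the remainder is the standard cellular-homology diagram chase. The only point worth flagging is notational: for $p=1$ the paper's $H_{p-1}(\widehat{U}_x)$ must be read as reduced homology, since $\widehat{U}_x$ then has the homology of $S^0$, which is exactly what your formula with $\widetilde{H}_{q-1}$ accounts for.
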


\subsection{Homologically admissible posets}

We recall the notion of homologically admissible posets introduced by Minian \cite{Minian}. We denote by $\mathcal{H}(X)$ the Hasse diagram associated to the poset $X$.

\begin{definition}[\cite{Minian}]
	Let $X$ be a poset. An edge $(w,x)\in \mathcal{H}(X)$ is homologically admissible if $\widehat{U}_x-\{w\}$ is acyclic. A poset is {\it homologically admissible} if all its edges are homologically admissible.
\end{definition}

The importance of homologically admissible posets, lies, partially, in the following result.

\begin{lema}[{\cite[Remark 3.9]{Minian}}] \label{lema:homolog_admissible_incidence_numbers}
	If $(w,x)$ is a homologically admissible edge of a cellular poset $X$, then the incidence number $\epsilon(x,w)$ is $1$ or $-1$.
\end{lema}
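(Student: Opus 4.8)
The plan is to compute the Mayer--Vietoris connecting morphism $\widetilde\partial$ directly, exploiting the acyclicity of the two pieces of the cover. Set $p=\deg(x)$, so that $\deg(w)=p-1$ since $X$ is graded and $w\prec x$. Write $A:=\widehat{U}_x-\{w\}$ and $B:=U_w$. The relation $w\prec x$ guarantees that no element lies strictly between $w$ and $x$, so removing $w$ from the down-set $\widehat{U}_x$ leaves a down-set; hence $A$ is open, $B=U_w$ is a basic open set, and one checks $A\cup B=\widehat{U}_x$ together with $A\cap B=\widehat{U}_w$. This is exactly the covering appearing in the definition of $\epsilon(x,w)$, so $\widetilde\partial$ is the connecting morphism of the associated reduced Mayer--Vietoris sequence.

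Next I would identify the homology of each piece. By hypothesis the edge $(w,x)$ is homologically admissible, so $A=\widehat{U}_x-\{w\}$ is acyclic. The set $B=U_w$ has $w$ as a maximum element, hence is contractible: the constant map at $w$ dominates the identity, so the two are homotopic in the finite-space sense, and $B$ is acyclic as well. Since $X$ is cellular, $\widehat{U}_x$ has the reduced homology of $S^{p-1}$ and $A\cap B=\widehat{U}_w$ has the reduced homology of $S^{p-2}$. Working with reduced homology throughout absorbs the degenerate bottom cases uniformly (for instance $\deg(w)=0$, where $\widehat{U}_w=\varnothing$ plays the role of $S^{-1}$).

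Finally I would read off the relevant segment of the reduced Mayer--Vietoris sequence in degree $p-1$, with $A\cap B=\widehat{U}_w$:
$$\widetilde{H}_{p-1}(A)\oplus\widetilde{H}_{p-1}(B)\longrightarrow \widetilde{H}_{p-1}(\widehat{U}_x)\xrightarrow{\ \widetilde\partial\ } \widetilde{H}_{p-2}(\widehat{U}_w)\longrightarrow \widetilde{H}_{p-2}(A)\oplus\widetilde{H}_{p-2}(B).$$
Acyclicity of $A$ and $B$ makes both flanking terms vanish, so exactness forces $\widetilde\partial$ to be an isomorphism between $\widetilde{H}_{p-1}(\widehat{U}_x)\cong\Z$ and $\widetilde{H}_{p-2}(\widehat{U}_w)\cong\Z$. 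Any isomorphism $\Z\to\Z$ is multiplication by $\pm 1$, which is precisely the statement that the degree $\epsilon(x,w)$ equals $1$ or $-1$.

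Once the set-up is in place, the conclusion is a two-line diagram chase, so the only genuine technical points are the openness of $A$ (which relies crucially on the covering relation $w\prec x$) and the identification of the definitional $\widetilde\partial$ with the connecting map of this particular cover. The most delicate step to phrase carefully is the reduced-homology bookkeeping in the lowest degrees, since it is what allows a single exact sequence to handle every value of $p$ at once.
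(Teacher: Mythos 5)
Your argument is correct. The paper does not prove this lemma itself (it is quoted from Minian's Remark 3.9), and your Mayer--Vietoris computation --- acyclicity of $\widehat{U}_x-\{w\}$ from admissibility, contractibility of $U_w$ from having a maximum, and exactness forcing the connecting map $\widetilde H_{p-1}(\widehat{U}_x)\to\widetilde H_{p-2}(\widehat{U}_w)$ to be an isomorphism of $\Z$ with $\Z$ --- is exactly the standard argument behind the cited result, with the openness of $\widehat{U}_x-\{w\}$ and the degenerate low-degree cases handled correctly.
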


\begin{remark}
	The face posets of regular CW-complexes are homologically admissible \cite[Remark 2.6]{Minian}. However, not every homologically admissible poset is the face poset of a regular CW-complex \cite[Example 2.7]{Minian}.
\end{remark}

\begin{lemma}[\cite{Minian}] \label{lemma:homologiaclly_admissible_is_cellular}
	Let $X$ be a poset. If $X$ is homologically admissible, then it is cellular.
\end{lemma}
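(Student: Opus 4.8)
The plan is to argue by induction on the height $p=h(x)$, establishing for every element $x$ the two statements simultaneously: (i) $U_x$ is homogeneous of degree $h(x)$ (so that $X$ is graded with $\deg(x)=h(x)$), and (ii) $\widehat{U}_x$ has the homology of an $S^{h(x)-1}$. The base case $p=0$ is immediate: such an $x$ is minimal, $U_x=\{x\}$ is homogeneous of degree $0$, and $\widehat{U}_x=\emptyset$ carries, by convention, the reduced homology of $S^{-1}$, so both (i) and (ii) hold.

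For the inductive step I would fix $x$ with $h(x)=p\geq 1$ and choose any lower cover $w\prec x$. Because $w\prec x$, no element of $\widehat{U}_x$ lies strictly above $w$, so $w$ is maximal in the down-set $\widehat{U}_x$ and $\widehat{U}_x-\{w\}$ is again a down-set, hence open; together with the minimal open set $U_w$ this reproduces the open covering $\widehat{U}_x=(\widehat{U}_x-\{w\})\cup U_w$ with intersection $\widehat{U}_w$ that already appears in the construction of the cellular differential. Feeding it into the reduced Mayer--Vietoris sequence, homological admissibility makes $\widehat{U}_x-\{w\}$ acyclic, while $U_w$ is contractible since it has a maximum; both direct-summand terms vanish and the connecting homomorphism yields isomorphisms $\widetilde{H}_n(\widehat{U}_x)\cong\widetilde{H}_{n-1}(\widehat{U}_w)$ for all $n$. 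As $w<x$ forces $h(w)<p$, the inductive hypothesis gives that $\widehat{U}_w$ has the homology of $S^{h(w)-1}$, whence $\widehat{U}_x$ has the homology of $S^{h(w)}$.

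The decisive point, which I expect to be the main obstacle, is that this computation used an arbitrary lower cover $w$, yet its conclusion is a complete determination of the homology of the single fixed space $\widehat{U}_x$. Since spheres of distinct dimensions have distinct homology, all lower covers of $x$ must share one common height $q$, and $\widehat{U}_x$ has the homology of $S^{q}$. This rigidity is exactly what forces gradedness: every maximal chain of $U_x$ terminates in some lower cover $w$ and restricts to a maximal chain of $U_w$, which by the inductive homogeneity of $U_w$ has length $h(w)=q$; appending $x$ shows every maximal chain of $U_x$ has length $q+1$. Hence $U_x$ is homogeneous of degree $q+1=h(x)$, so $\deg(x)=h(x)=q+1$, and $\widehat{U}_x$ has the homology of $S^{q}=S^{\deg(x)-1}$, closing both parts of the induction. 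Having verified the two defining conditions for every $x$, I conclude that $X$ is cellular.
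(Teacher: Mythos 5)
The paper gives no proof of this lemma (it is quoted from Minian), so there is nothing in-paper to compare against; your argument is correct and essentially the standard one: a simultaneous induction on height using the Mayer--Vietoris sequence of the open cover $\widehat{U}_x=(\widehat{U}_x-\{w\})\cup U_w$ --- the same cover the paper invokes to define incidence numbers --- to obtain $\widetilde{H}_n(\widehat{U}_x)\cong\widetilde{H}_{n-1}(\widehat{U}_w)$, followed by the sphere-homology rigidity argument forcing all lower covers of $x$ to share a common height, which is indeed the crux of gradedness. The only step worth making explicit is the case of a minimal lower cover $w$, where the cover has empty intersection and one must use the augmented Mayer--Vietoris sequence with $\widetilde{H}_{-1}(\emptyset)=\mathbb{Z}$; this is consistent with the paper's convention that the empty set is not acyclic, which also guarantees $\widehat{U}_x-\{w\}\neq\emptyset$ so that the reduced sequence applies to the two pieces of the cover.
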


\begin{remark}
	In Lemma \ref{lemma:homologiaclly_admissible_is_cellular} it is assumed that the empty set is not acyclic. 
\end{remark}

\subsection{Euler Characteristic}

\begin{definition}
	Let $X$ be a finite graded poset of degree $n$. Denote by $X^{(=p)}$ the elements of degree $p$. The {\em graded Euler-Poincar\'{e} characteristic} of $X$ is defined as the number: $$\chi_{g}(X)=\sum_{i=0}^n(-1)^{i}\#X^{(=p)}.$$
\end{definition}

It is clear that given a poset $X$ of the form $X=\Delta(K)$ for a finite simplicial complex $K$, then $\chi_{g}(X)=\chi(\mathcal{K}(X))$. Moreover, as a consequence of Minian's result (Theorem \ref{thm:cellular_homology_isomorphis_homology_cellular_posets}), the standard homological argument (see for example \cite[p. 146-147]{Hatcher}) proves that for a finite cellular poset $X$, $\chi_{g}(X)=\chi(\mathcal{K}(X))$. However, this does not hold in general for finite posets as the following example illustrates: 

\begin{example}
	Consider the poset $X$ represented in Figure \ref{fig:non_coincidence_of_euler_characs}. Due to the homotopic invariance of $\chi$, $\chi(\mathcal{K}(X))=1$ because $X$ is contractible by removing beat points. However, $\chi_{g}(X)=2$.
	\begin{figure}[htbp]
		\centering
		\includegraphics[scale=0.5]{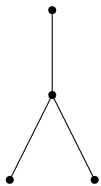}
		\caption{A poset where $\chi_{g}(X)\neq\chi(\mathcal{K}(X))$.}
		\label{fig:non_coincidence_of_euler_characs}
	\end{figure}
\end{example}

\section{Dynamics and Morse-Bott functions for posets}\label{sec:matchings_morse_bott_functions}

In this section we generalize the notion of discrete Morse-Bott function, introduced by Forman \cite{Forman3}, to the context of posets. This also generalizes that of Morse functions on posets defined by Minian \cite{Minian}.

\subsection{Morse functions}
We recall the definition of Morse function for posets introduced by Minian \cite{Minian}. 	

\begin{definition} Let $X$ be a finite poset.  A  {\it Morse function} is a function $f\colon X \to \R$  such that, for every $x\in X$, we have 
	$$
	\#\{y\in X\colon x\prec y \text{ and } f(x)\geq f(y)\}\leq 1
	$$
	and
	$$
	\#\{z\in X \colon z\prec x \text{ and } f(z)\geq f(x)\}\leq 1.
	$$
	
	If $f$ is a Morse function, an element $x\in X$ is said to be {\it critical} if
	$$
	\#\{y\in X \colon x\prec y \text{ and } f(x)\geq f(y)\}=0
	$$
	and
	$$
	\#\{z\in X \colon z\prec x \text{ and } f(z)\geq f(x)\}=0.
	$$
\end{definition}

The set of critical points is denoted by $\crit{f}$. The images of the critical points are called {\it critical values} and the real numbers which are not critical are called {\it regular values}. The points which are not critical values are said to be {\it regular points}.

\subsection{Matchings}

Forman \cite{Forman4} introduced combinatorial vector fields. It is easy to see that this notion can be substituted by the concept of  matching introduced to the context of discrete Morse Theory by Chari \cite{Chari}.

\begin{definition}
	A {\it matching}  in a poset $X$ is a subset $\mathcal{M} \subset X\times X$ such that
	\begin{itemize}
		\item
		$(x, y) \in \mathcal{M}$ implies $x\prec y$;
		\item
		each $x \in X$ belongs to at most one element in $\mathcal{M}$.
	\end{itemize}
\end{definition}

Given a poset $X$, let us denote by $\mathcal{H}(X)$ its associated Hasse diagram.  If $\mathcal{M}$ is a matching in $X$, write $\mathcal{H}_{\mathcal{M}}(X)$ for the directed graph obtained from $\mathcal{H}(X)$ by reversing the orientations of the edges which are not in $\mathcal{M}$. Any node of $\mathcal{H}(X)$ not incident with any edge of $\mathcal{M}$ is called {\it critical}. The set of all critical nodes of $\mathcal{M}$ is denoted by $C_{\mathcal{M}}$.

\begin{definition}
	Let $\mathcal{M}$ be a matching on a poset $X$ and let $x^{(p)}$ and $\tilde{x}^{(p)}$ be two elements of $X$. An $\mathcal{M}$-path, $\gamma$, of index $p$ from $x^{(p)}$ to $\tilde{x}^{(p)}$ is a sequence:
	$$\gamma \co x=x_0^{(p)}\prec y_0^{(p+1)} \succ x_1^{(p)}\prec y_1^{(p+1)} \succ \cdots \prec y_{r-1}^{(p+1)} \succ x_r^{(p)}=\tilde{x}^{(p)}$$
	such that for each $i=0,1,\ldots, r-1$ with $r\geq 1$:
	\begin{enumerate}
		\item $(x_i,y_i) \in \mathcal{M}$,
		\item $x_i\neq x_{i+1}$.
	\end{enumerate}
\end{definition}

A {\it $\mathcal{M}$-cycle} $\gamma$ in $\mathcal{H}_{\mathcal{M}}(X)$ is a closed $\mathcal{M}$-path in $\mathcal{H}_{\mathcal{M}}(X)$ seen as a directed graph. And the matching $\mathcal{M}$ is said to be a {\it Morse matching} if $\mathcal{H}_{\mathcal{M}}(X)$ is acyclic.

\subsection{Critical subposets}\label{subsec:critical_subposets} In this subsection we develop the notion of critical subposet ({\it chain recurrent set}) by means of matchings generalizing the analogous notion introduced by Forman \cite{Forman3} in the context of discrete Morse Theory.  
\begin{definition}
	Let $\mathcal{M}$ be a matching on $X$. We say that $x^{(p)}\in X$ is an element of the {\it chain recurrent set} $\mathcal{R}$ if one of the following conditions holds:
	\begin{itemize}
		\item $x$ is a critical point of $\mathcal{M}$.
		\item There is a $\mathcal{M}$-cycle $\gamma$ in $\mathcal{H}_{\mathcal{M}}(X)$  such that $x\in \gamma$.
	\end{itemize}
\end{definition}

The chain recurrent set decomposes into disjoint subsets $\Lambda_i$ by means of the equivalence relation defined as follows: 
\begin{enumerate}
	\item If $x$ is a critical point, then it is only related to itself.
	\item Given $x,y\in \mathcal{R}$, $x\neq y$,  $x \sim y$ if there is cycle $\gamma$ such that $x,y\in \gamma$. 
\end{enumerate}

Let $\Lambda_1,\ldots,\Lambda_k$ be the equivalence classes of $\mathcal{R}$. The $\Lambda_i's$ are called {\it basic sets}. Each $\Lambda_i$ consists of either a single critical point of $\mathcal{M}$ or a union of cycles.


\begin{example}
	Consider the finite model of $\mathbb{R}P^2$ depicted in Figure \ref{fig:RP2} (see \cite[Example 7.1.1]{Barmak_book}). There is a critical point which is also a basic set, depicted with a cross. Moreover, the dashed and dotted arrows represent another two basic sets, each consisting of one cycle.
	\begin{figure}[htbp]
		\centering
		\includegraphics[scale=0.5]{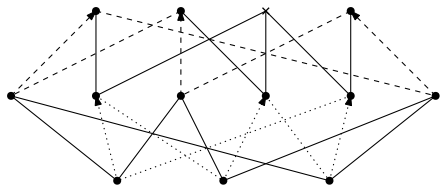}
		\caption{A finite model of $\mathbb{R}P^2$.}
		\label{fig:RP2}
	\end{figure}

\end{example}

\subsection{Integration of matchings}\label{subsection:morse-bott_and_matchings} When working on the differentiable category, Morse theory generalizes naturally to Morse-Bott Theory. The purpose of this subsection is to generalize Minian's integration result for matchings \cite[Lemma 3.12]{Minian} to the context of Morse-Bott functions and arbitrary matchings. 

\begin{definition}
	Given a matching on a finite poset $X$ a function $f\co X\to \R$ is said to be a {\it Morse-Bott} or {\it Lyapunov} function if it is constant on each basic set and it is a Morse function away from the chain recurrent set.
\end{definition}

We say that the {\em critical values} of a Morse-Bott function are the images of the basic sets. The ideas of Forman's proof of \cite[Theorem 2.4]{Forman3} generalize to the context of graded posets giving:

\begin{theorem}[Integration of matchings]\label{thm:integration_matchings}
	Let $X$ be a finite graded  poset and let $\mathcal{M}$ be a matching in $X$. Then there exists a Morse-Bott function $f\co X\to \R$ such that:
	\begin{enumerate}
		\item If $x^{(p)}\notin \mathcal{R}$ and $y^{(p+1)}\succ x$, then
		$$\begin{cases} 
		f(x)<f(y), & \text{ if } (x,y) \notin \mathcal{M}, \\
		f(x)\geq f(y), & \text{ if } (x,y) \in \mathcal{M}.
		\end{cases}$$
		\item If $x^{(p)} \in \mathcal{R}$ and $y^{(p+1)}\succ x$, then
		$$\begin{cases} 
		f(x)=f(y), & \text{ if } x\sim y, \\
		f(x)< f(y), & \text{ if } x \nsim y.
		\end{cases}$$
	\end{enumerate}
\end{theorem}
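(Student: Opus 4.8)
The plan is to construct $f$ as a global Lyapunov function for the discrete flow encoded by $\mathcal{H}_{\mathcal{M}}(X)$, exploiting the fact that the recurrence of this flow is concentrated on the basic sets. I would first record a structural fact about directed cycles: in $\mathcal{H}_{\mathcal{M}}(X)$ the matched edges point up and the reversed unmatched edges point down, and the endpoint of any up-edge $u\to v$ is matched downward to $u$, hence cannot be matched upward and admits no further up-edge. Consequently, along a directed cycle every up-step is immediately followed by a down-step; since a cycle uses as many up- as down-steps, a counting argument forces strict alternation. Thus every directed cycle of $\mathcal{H}_{\mathcal{M}}(X)$ stays within two consecutive degrees and is exactly an $\mathcal{M}$-cycle, so the vertices lying on directed cycles are precisely the non-critical recurrent points.

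With this in hand I would analyze the strongly connected components (SCC) of $\mathcal{H}_{\mathcal{M}}(X)$. The claim is that they are exactly the basic sets $\Lambda_1,\dots,\Lambda_k$ together with one singleton $\{x\}$ for each $x\notin\mathcal{R}$: a non-recurrent or critical point lies on no directed cycle and is therefore a trivial SCC, whereas two recurrent points are mutually reachable if and only if a closed walk — which decomposes into $\mathcal{M}$-cycles sharing vertices — joins them, which is exactly the relation $\sim$ defining membership in a common basic set. Since every covering relation $x\prec y$ of the graded poset $X$ appears in $\mathcal{H}_{\mathcal{M}}(X)$ either as $x\to y$ (if $(x,y)\in\mathcal{M}$) or, reversed, as $y\to x$ (if $(x,y)\notin\mathcal{M}$), all the inequalities demanded by the statement are inequalities along edges of this graph.

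Next I would pass to the condensation of $\mathcal{H}_{\mathcal{M}}(X)$, which is automatically acyclic, and choose a linear extension $S_1,\dots,S_m$ of its vertices so that every edge runs from a lower to a higher index. Assigning a strictly decreasing family $v_1>v_2>\dots>v_m$ and setting $f\equiv v_i$ on $S_i$ makes $f$ constant on every basic set and strictly decreasing along every edge between distinct SCCs. Conditions (1) and (2) then read off directly: for $x\notin\mathcal{R}$ and $y\succ x$, the edge is $x\to y$ when $(x,y)\in\mathcal{M}$ (so $f(x)>f(y)$) and $y\to x$ otherwise (so $f(x)<f(y)$); for $x\in\mathcal{R}$ and $y\succ x$, one has $f(x)=f(y)$ when $x\sim y$ (same SCC) and, since a recurrent point is matched only inside its own basic set, the edge is reversed to $y\to x$ when $x\nsim y$, giving $f(x)<f(y)$. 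These two conditions force the Morse property off $\mathcal{R}$: for non-recurrent $x$ the only upper neighbour with smaller or equal value is its upward match and the only lower neighbour with larger or equal value is its downward match, each occurring at most once, so $f$ is a Morse-Bott function.

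I expect the main obstacle to be the structural identification in the first two paragraphs — proving cleanly that the directed cycles of $\mathcal{H}_{\mathcal{M}}(X)$ are exactly the $\mathcal{M}$-cycles and that the resulting strongly connected components coincide with the basic sets and the non-recurrent singletons, the delicate points being that the matched partner of a recurrent element stays in its basic set and that a closed walk can be reorganized into cycles realizing the relation $\sim$. Once this dictionary between the flow and the basic sets is established, the acyclicity of the condensation is automatic and the assignment of values together with the verifications is routine.
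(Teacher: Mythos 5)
Your proof is correct and follows essentially the same route as the paper, which simply invokes Forman's Lyapunov-function construction: the recurrence of $\mathcal{H}_{\mathcal{M}}(X)$ is concentrated on the basic sets, so the condensation is acyclic and a topological order yields the desired function. Your packaging via strongly connected components is a clean modern phrasing of that argument, and the key verifications (directed closed walks strictly alternate in degree and are exactly closed $\mathcal{M}$-paths, and the matched partner of a recurrent element stays in its basic set) are all sound.
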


We introduce the following definition: given a finite poset $X$ and a Morse-Bott function $f\co X\to \R$, for each $a\in \mathbb{R}$ we write
$$X_a=\bigcup\limits_{f(x)\leq a}{U_x}.$$

\subsection{Morse-Smale matchings}

In this subsection we generalize the notion of Morse-Smale vector field from the context of simplicial complexes \cite{Forman3} to the setting of finite spaces.

Let $X$ be a homologically admissible poset and let $\mathcal{M}$ be a matching on $X$. A $\mathcal{M}$-cycle $\gamma$ is {\em prime} if they do not exist a natural number $n>1$ and a $\mathcal{M}$-cycle $\widetilde{\gamma}$ such that   $\gamma$ is the concatenation of $\widetilde{\gamma}$ $n$ times (see \cite[Definition 5.3]{Forman3} for details).

An equivalence relation on the set of $\mathcal{M}$-cycles is defined as follows. Two $\mathcal{M}$-cycles $\gamma$ and $\widetilde{\gamma}$ are equivalent if $\widetilde{\gamma}$ is the result of varying the starting point of $\gamma$ (see \cite[p. 631]{Forman3} for an example). An equivalence class of $\mathcal{M}$-cycles is called a {\em closed $\mathcal{M}$-orbit}. The equivalence class of $\gamma$ is denoted by $[\gamma]$. The concepts of {\em prime closed $\mathcal{M}$-orbit} and {\em index of an closed $\mathcal{M}$-orbit} are defined as expected (see \cite{Forman3} for details).

A special kind of matching which will play an important role is the following. In a certain sense, we control the complexity of the chain recurrent set.

\begin{definition}
	Let $X$ be a homologically admissible poset. A matching $\mathcal{M}$ on $X$ is a {\em Morse-Smale matching} if the chain recurrent set $\mathcal{R}$ consists only of critical points and pairwise disjoint prime closed $\mathcal{M}$-orbits.
\end{definition}

\section{Fundamental Theorems and consequences}\label{sec:fundamental_theorems_and_consequences}

 The purpose of this Section is to prove the Fundamental Theorems of Morse Theory for  Morse-Bott functions on posets and obtain some consequences.
 
\subsection{Fundamental Theorems} 
  In what follows, we extend the equivalence relation  defined in Subsection \ref{subsec:critical_subposets} from $\mathcal{R}$ to all $X$ by saying that a point which is not critical is an equivalence class on its own. 

\begin{definition}
	Given a finite poset $X$, $x\in X$ and a matching $\mathcal{M}$ on $X$, we define: $$\partial [x]=\{w\in X \co w\prec \tilde{x} \text{ for some } \tilde{x}\sim x \text{ but } w \nsim \tilde{x}\}.$$
\end{definition}

\begin{example}
	Consider the poset depicted in Figure \ref{fig:RP2}. In Figure \ref{fig:RP2_d[x]} we show $\partial [x]$ for any $x$ in the dashed cycle of Figure \ref{fig:RP2}.
	\begin{figure}[htbp]
		\centering
		\includegraphics[scale=0.5]{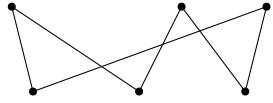}
		\caption{Example of $\partial [x]$.}
		\label{fig:RP2_d[x]}
	\end{figure}
\end{example}
 We introduce some auxiliary notation. For each edge $(x,y)\in \mathcal{M}$, we say that $x$ is the {\em source} of the edge and $y$ is the {\em target}.  For convenience, we define the {\em source} and {\em target maps} (only defined for elements in the matching $\mathcal{M}$) as follows: given $(x,y)\in \mathcal{M}$, $s(y)=x$ and $t(x)=y$.

The lemma below follows from the definition of matching:

\begin{lema}\label{lema:technical_Lyapunov}
	Let $\gamma$ be a cycle of index $p$ and let $u^{(p-1)}\in X$, $\tilde{v}^{(p)}\in X$, $w^{(p+1)}\in X$ and $r^{(p+2)}\in X$ such that $u,\tilde{v},w,r\notin \gamma$. Then it holds the following: $$t(u)\notin \gamma, \; t(\tilde{v})\notin \gamma, \; s(w)\notin \gamma \; \text{ and } \; s(r)\notin \gamma.$$
\end{lema}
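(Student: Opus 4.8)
The plan is to reduce all four assertions to the single defining property of a matching: every element of $X$ is incident with at most one edge of $\mathcal{M}$. First I would record the structure of a cycle of index $p$. Writing $\gamma$ as $x_0^{(p)} \prec y_0^{(p+1)} \succ x_1^{(p)} \prec \cdots \succ x_r^{(p)} = x_0^{(p)}$, the elements of $\gamma$ are exactly the degree-$p$ elements $x_0,\ldots,x_{r-1}$ and the degree-$(p+1)$ elements $y_0,\ldots,y_{r-1}$, and $(x_i,y_i)\in\mathcal{M}$ for every $i$. Hence each degree-$p$ element of $\gamma$ is the \emph{source} of a matching edge (with target in $\gamma$) and each degree-$(p+1)$ element of $\gamma$ is the \emph{target} of a matching edge (with source in $\gamma$). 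I would also fix the elementary degree bookkeeping: when $t(\cdot)$ is defined it raises degree by one, and when $s(\cdot)$ is defined it lowers degree by one. If any of $t(u),t(\tilde{v}),s(w),s(r)$ is undefined, the corresponding claim is vacuous, so I assume throughout that the relevant element is itself incident to a matching edge.

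With this in hand each statement is a short argument by contradiction controlled by degrees. For $t(u)$, with $u$ of degree $p-1$, the target $t(u)$ has degree $p$, so were it in $\gamma$ it would equal some $x_i$; then $(u,x_i)\in\mathcal{M}$ and $(x_i,y_i)\in\mathcal{M}$ are two distinct edges through $x_i$, contradicting the matching condition. For $t(\tilde{v})$, with $\tilde{v}$ of degree $p$, the target has degree $p+1$ and could only coincide with some $y_j\in\gamma$, forcing $(\tilde{v},y_j)$ and $(x_j,y_j)$ to be distinct edges through $y_j$. Symmetrically, for $s(w)$ with $w$ of degree $p+1$ the candidate in $\gamma$ is a degree-$p$ element $x_i$, giving $(x_i,w)$ and $(x_i,y_i)$; and for $s(r)$ with $r$ of degree $p+2$ the candidate is a degree-$(p+1)$ element $y_j$, giving $(y_j,r)$ and $(x_j,y_j)$. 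In every case two distinct matching edges would meet at one vertex of $\gamma$, which is impossible.

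The only point needing genuine care, and the closest thing to an obstacle, is the \emph{distinctness} of the two matching edges produced in each case, since the contradiction rests entirely on the prohibition against two edges at a single vertex. This is where the hypotheses $u,\tilde{v},w,r\notin\gamma$ enter: the outside vertex of the conjectural edge lies off $\gamma$, whereas the partner in the genuine cycle-edge lies on $\gamma$, so the two edges cannot coincide. In the two outer cases distinctness is in fact already forced by degree alone ($u$ has degree $p-1$ versus $y_i$ of degree $p+1$, and $r$ has degree $p+2$ versus $x_j$ of degree $p$), so the hypothesis is truly needed only in the two middle cases $t(\tilde{v})$ and $s(w)$, where the two candidate partners share a degree. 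I would present the four cases in this uniform way and close by observing that nothing about the cycle is used beyond the fact that $(x_i,y_i)\in\mathcal{M}$ for all $i$.
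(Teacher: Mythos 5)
Your proof is correct and follows exactly the route the paper intends: the paper gives no written proof beyond the remark that the lemma ``follows from the definition of matching,'' and your case-by-case argument---producing two distinct matching edges at a single vertex of $\gamma$ and contradicting the at-most-one-edge condition---is precisely that argument spelled out. Your observation about where the hypothesis $u,\tilde{v},w,r\notin\gamma$ is genuinely needed (only in the two middle cases, where degrees alone do not force distinctness) is a correct and careful addition.
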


Our next result is a homological collapsing theorem for Morse-Bott functions. As a consequence of the Lemma \ref{lema:technical_Lyapunov}, the elements of a cycle can not be connected by arrows with elements which are not in the cycle. Therefore, the result below follows from \cite[Theorem 4.2.2]{SamQuiDaSVil}.

\begin{theorem}\label{thm:homological_collapse_thm_Lyapunov} Let $X$ be a finite homologically admissible poset and let $f\colon X \to \mathbb{R}$ be a Morse-Bott function. If $[a,b]$ contains no critical values,  then $i\colon X_a \hookrightarrow X_b$ induces an isomorphism in homology.
\end{theorem}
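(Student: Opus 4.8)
The plan is to reduce the statement to the homological collapsing theorem already available in the literature, namely \cite[Theorem 4.2.2]{SamQuiDaSVil}, by exhibiting the inclusion $X_a \hookrightarrow X_b$ as an instance of the situation handled there. Since $[a,b]$ contains no critical values, every element $x$ with $a < f(x) \leq b$ is a regular point of the Morse-Bott function $f$; that is, $x$ lies outside the chain recurrent set $\mathcal{R}$, and hence $f$ behaves as a genuine Morse function on the portion of $X$ being added as we pass from $X_a$ to $X_b$. First I would make precise which elements are introduced in going from $X_a$ to $X_b$ and argue that the matching $\mathcal{M}$, restricted to this region, is an \emph{acyclic} matching: by definition of $\mathcal{R}$, any element lying on an $\mathcal{M}$-cycle belongs to a basic set and therefore takes a critical value, so no $\mathcal{M}$-cycle can be contained in the set $\{x : a < f(x) \leq b\}$.

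Next I would verify the separation property that makes the collapse clean, and this is where Lemma \ref{lema:technical_Lyapunov} (and, more importantly for the regular region, the Morse-function inequalities of Theorem \ref{thm:integration_matchings}(1)) does the work. The key point is that the matched pairs $(x,y) \in \mathcal{M}$ with both $x$ and $y$ regular in the band $(a,b]$ form an internal collapsing structure that does not interact with critical cells or with elements already in $X_a$: along an arrow $(x,y)\in\mathcal{M}$ we have $f(x)\geq f(y)$, while along every reversed (non-matching) arrow $f$ strictly increases, so the flow induced by $\mathcal{H}_{\mathcal{M}}(X)$ stays within the sublevel structure and cannot escape into the critical part. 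Concretely, I would check that the hypotheses of \cite[Theorem 4.2.2]{SamQuiDaSVil} are met: a homologically admissible poset together with an acyclic matching whose critical points all lie in $X_a$, so that the added elements are partitioned into matched pairs that can be removed without changing homology.

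The main obstacle I anticipate is bookkeeping rather than conceptual: one must confirm that passing from $X_a$ to $X_b$ adds elements precisely in matched pairs \emph{relative to} $X_a$, i.e.\ that no element $x$ in the band is matched to an element $y$ already lying in $X_a$ in a way that would break the pairing. This requires using the integration inequalities of Theorem \ref{thm:integration_matchings}(1) to control the relative values of $f$ across each matched edge and each covering relation: if $(x,y)\in\mathcal{M}$ then $f(x)\geq f(y)$, so $x$ and $y$ enter the sublevel sets together (neither can be separated by the threshold $a$), whereas Lemma \ref{lema:technical_Lyapunov} guarantees the absence of stray arrows connecting the region to anything critical. Once this is established, \cite[Theorem 4.2.2]{SamQuiDaSVil} applies verbatim and yields that the inclusion $i\colon X_a \hookrightarrow X_b$ is a homology isomorphism.

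\textbf{Remark on the shortcut.} Because the paper has already invested in the homological collapsing machinery of \cite{SamQuiDaSVil}, I expect the cleanest write-up to be quite short: state that the region $\{x : a < f(x)\leq b\}$ carries an acyclic matching with no critical points, invoke the separation consequence of Lemma \ref{lema:technical_Lyapunov}, and cite \cite[Theorem 4.2.2]{SamQuiDaSVil} directly, as the excerpt itself foreshadows immediately before the theorem statement.
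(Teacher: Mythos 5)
Your proposal follows essentially the same route as the paper: the paper's entire argument for this theorem is the two-sentence remark preceding it, which invokes Lemma \ref{lema:technical_Lyapunov} to ensure that cycles do not interact with the regular region and then cites \cite[Theorem 4.2.2]{SamQuiDaSVil}. Your write-up simply fills in the bookkeeping (acyclicity of the restricted matching, matched pairs entering sublevel sets together via Theorem \ref{thm:integration_matchings}(1)) that the paper leaves implicit, so it is a correct and somewhat more detailed version of the same argument.
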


In this generalized context, we also have a result which explains what happens when a critical value is reached.  

\begin{theorem}\label{thm:homot_poset_crit_points_Lyapunov}  
	Let $X$ be a finite homologically admissible poset and let $f\co X \to \R$ be a Morse-Bott function. If $f(x)\in [a,b]$ is a critical value and there are no other values of $f$ in $[a,b]$, then $X_b=X_a \cup_{\partial [x]}[x]$. 
\end{theorem}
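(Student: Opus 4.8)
The plan is to read both sides as subposets of $X$ and reduce the statement to the set-theoretic equality $X_b=X_a\cup[x]$, after which the order relations on $X_b$ are automatically inherited from $X$ and the interface between old and new elements is recorded by $\partial[x]$. First I would rewrite the sublevel set in the convenient form
$$X_a=\{z\in X\co \min_{y\geq z}f(y)\leq a\}=\{z\in X\co F_z\cap f^{-1}((-\infty,a])\neq\emptyset\},$$
which shows that $X_a$ is a down-set of $X$ and that $X_a\subseteq X_b$. Since $f(x)$ is the only value of $f$ in $[a,b]$ and $\min_{y\geq u}f(y)$ is an attained value of $f$, an element $u$ lies in $X_b\setminus X_a$ exactly when $\min_{y\geq u}f(y)=f(x)$. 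Because $f\equiv f(x)$ on $[x]$ and $f(x)\leq b$, we have $[x]\subseteq X_b$; so it remains to prove the inclusions $\partial[x]\subseteq X_a$ and $X_b\setminus X_a\subseteq[x]$, and then to identify the gluing.

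Second, I would establish the inclusion $\partial[x]\subseteq X_a$, which is the heart of the argument. Fix $w\in\partial[x]$, so that $w\prec\tilde{x}$ with $\tilde{x}\in[x]$ and $w\nsim\tilde{x}$. If $[x]$ is a single critical point it is unmatched, so $(w,\tilde{x})\notin\mathcal{M}$; if $[x]$ is a union of cycles then every element of $[x]$ is already matched with its partner \emph{inside} $[x]$, so by Lemma \ref{lema:technical_Lyapunov} (the elements of a cycle are never matched to elements outside it) again $(w,\tilde{x})\notin\mathcal{M}$. Consequently the Lyapunov inequalities of Theorem \ref{thm:integration_matchings}, applied to the cover $w\prec\tilde{x}$ (property (1) when $w\notin\mathcal{R}$ and property (2) when $w\in\mathcal{R}$, in both cases using $w\nsim\tilde{x}$ together with $(w,\tilde x)\notin\mathcal M$), give the strict inequality $f(w)<f(\tilde{x})=f(x)$. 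As $f(x)$ is the only value of $f$ in $[a,b]$, any value strictly below it must be $\leq a$; hence $f(w)\leq a$ and $w\in U_w\subseteq X_a$.

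Third, I would deduce $X_b\setminus X_a\subseteq[x]$ from the previous step. Let $u\in X_b\setminus X_a$; then $\min_{y\geq u}f(y)=f(x)$, attained at some $y^{*}\geq u$ with $f(y^{*})=f(x)$, and since at a critical value the only points of $X$ carrying that value form the basic set we get $y^{*}\in[x]$. Choose a saturated chain $u=z_0\prec z_1\prec\cdots\prec z_k=y^{*}$. If $u\notin[x]$, then since the chain begins outside $[x]$ and ends inside it there is an index $j$ with $z_j\notin[x]$ and $z_{j+1}\in[x]$; as $z_j\prec z_{j+1}$ with $z_{j+1}\in[x]$ and $z_j\nsim z_{j+1}$, we have $z_j\in\partial[x]\subseteq X_a$, whence $u\leq z_j$ forces $\min_{y\geq u}f(y)\leq a<f(x)$, a contradiction. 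Thus $u\in[x]$. Combined with $X_a\cup[x]\subseteq X_b$ this yields $X_b=X_a\cup[x]$ as subposets of $X$; because $X_a$ is a down-set, the elements of $[x]\setminus X_a$ sit on top of $X_a$ and their covers into $X_a$ land precisely in $\partial[x]$, which exhibits $X_b$ as the adjunction $X_a\cup_{\partial[x]}[x]$.

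The main obstacle is the second step, namely controlling $f$ immediately below the basic set: everything hinges on a cycle being matched internally, so that the edges joining $\partial[x]$ to $[x]$ are unmatched and the Lyapunov inequalities force the strict drop $f(w)<f(x)$ there. This is exactly what places $\partial[x]$ inside $X_a$ and, through the entry-edge argument of the third step, prevents any element outside the basic set from becoming newly visible at the level $f(x)$. A secondary point to pin down is the identification of the level set $f^{-1}(f(x))$ with $[x]$, which is where the hypothesis that $f(x)$ is the unique value of $f$ in $[a,b]$, together with the normalization furnished by Theorem \ref{thm:integration_matchings}, is used.
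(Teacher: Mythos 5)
Your argument is correct and runs on the same engine as the paper's proof: the Lyapunov inequalities of Theorem \ref{thm:integration_matchings}, combined with the fact that a cycle is matched internally (Lemma \ref{lema:technical_Lyapunov}), force $f(w)<a$ for every $w\in\partial[x]$, hence $\partial[x]\subset X_a$. What you add is the converse inclusion $X_b\subseteq X_a\cup[x]$: your description of $X_a$ as the down-set $\{z\in X\co \min_{y\geq z}f(y)\leq a\}$ together with the entry-edge argument along a saturated chain makes explicit a step that the paper's proof leaves implicit (it establishes $\partial[x]\subset X_a$ and $[x]\cap X_a=\emptyset$ and then asserts the conclusion); you also handle the case where $[x]$ is a single critical point uniformly instead of quoting an earlier collapse theorem. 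The one piece of the paper's argument your write-up omits is the verification that $[x]\cap X_a=\emptyset$, which is needed to conclude that $X_b\setminus X_a$ is \emph{all} of $[x]$ rather than a proper subset, i.e.\ that $[x]$ really is attached along $\partial[x]$ as new material. This is filled exactly as in the paper: for $y\succ\tilde{x}$ with $\tilde{x}\in[x]$ and $y\notin[x]$ the upward Lyapunov inequality (property (2) of Theorem \ref{thm:integration_matchings}, since $\tilde{x}\in\mathcal{R}$ and $\tilde{x}\nsim y$) gives $f(y)>f(\tilde{x})=f(x)$, hence $f(y)>b$, and Lemma \ref{lema:technical_Lyapunov} propagates this to every $z>\tilde{x}$ outside $[x]$, so $\min_{y\geq\tilde{x}}f(y)=f(x)>a$ and $\tilde{x}\notin X_a$. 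With that sentence added, your proof is complete and, if anything, more detailed than the original.
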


\begin{proof}
	There are two cases to consider. First, assume that $[x]$ is a critical point, then the results reduces to \cite[Theorem 4.2.8]{SamQuiDaSVil}. So, assume $[x]$ is a cycle of index $p$. Let $\tilde{f}\co X/\sim \to \R$ denote the function induced by $f$ on the set of equivalence classes. We may assume that $\tilde{f}$ is injective, that $\tilde{f}([x])>a$ and that the only critical subposet in $f^{-1}([a,b])$ is $[x]$. 
	
	Since $[x]$ is a cycle and $f(x)$ is a critical value, then given $y^{(p+1)}\succ \tilde{x}$ and $y\notin [x]$ with $\tilde{x}\in [x]$, $f(y)>f(\tilde{x})$. Hence, $f(y)>b$ and Lemma \ref{lema:technical_Lyapunov} guarantees that $f(z)>b$ for every $z>\tilde{x}$, $z\notin [x]$. Therefore, $[x]\cap X_a=\emptyset$. Given any $w^{(p-1)} \prec \tilde{x}^{(p)}$, $\tilde{x}\in [x]$ and $w\notin [x]$ or $w^{(p)} \prec \tilde{x}^{(p+1)}$, $\tilde{x}\in [x]$ and $w\notin [x]$, due to the criticality of $[x]$, it holds that $f(w)<f(\tilde{x})$. Therefore $f(w)<a$ and $w\in X_a$. Hence $\partial [x] \subset X_a$. That is, $X_b=X_a \cup_{\partial [x]}[x]$. 
\end{proof}

\subsection{Morse-Bott inequalities}\label{subsubsection:morse_bott_pitcher_inequalities}
In this subsection we generalize Morse-Bott inequalities from the context of CW-complexes \cite[Theorem 3.1]{Forman3} to the setting of posets. This result can be seen  a combinatorial analogue of a theorem due to Conley \cite[Theorem 1.2]{Franks} \cite{Conley}.  Again, we assume that our coefficients are any principal ideal domain $R$. From now on the poset $X$ is assumed to be homologically admissible.

Given a subposet $Y\subset X$ we denote by $\bar{Y}$ the subposet $\cup_{x\in Y}U_x$ and by $\dot{Y}=\bar{Y}-Y$.

\begin{definition}
For each $k\geq 0$, we define $$m_k=\sum_{\text{basic sets } \Lambda_i} \rank H_k(\bar{\Lambda}_i,\dot{\Lambda}_i).$$
\end{definition}

Observe that in the particular case we have a Morse matching, then the basic sets are just critical points and $m_k$ is the number of critical points of index $k$.

\begin{lema}
	If the index of the basic set $\Lambda_i$ is $p$, then $H_k(\bar{\Lambda}_i,\dot{\Lambda}_i)=0$ unless $k=p,p+1$. Moreover, if $\Lambda_i$ is just a critical point $x^{(p)}$, then $H_k(\bar{\Lambda}_i,\dot{\Lambda}_i)=0$ for $k\neq p$ and the principal domain of coefficients, $R$, for $k=p$.
\end{lema}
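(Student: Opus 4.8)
The plan is to compute $H_*(\bar{\Lambda}_i,\dot{\Lambda}_i)$ by means of the relative cellular chain complex of Farmer and Minian, so that the vanishing outside degrees $p$ and $p+1$ becomes a mere bookkeeping of which degrees occur among the elements of $\Lambda_i$. First I would verify that both $\bar{\Lambda}_i$ and $\dot{\Lambda}_i$ are down-closed (hence open) subposets of $X$. For $\bar{\Lambda}_i=\cup_{z\in\Lambda_i}U_z$ this is immediate. For $\dot{\Lambda}_i=\bar{\Lambda}_i-\Lambda_i$ it rests on the following observation: when $\Lambda_i$ is a union of cycles of index $p$, its elements have degree $p$ (the sources $x_j$) or $p+1$ (the targets $y_j$), and the only degree-$(p+1)$ elements of $\bar{\Lambda}_i$ are precisely the $y_j\in\Lambda_i$, since any $v\in\bar{\Lambda}_i$ with $\deg(v)=p+1$ satisfies $v\leq z$ for some $z\in\Lambda_i$, forcing $\deg(z)=p+1$ and hence $v=z=y_j$. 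From this one checks that deleting $\Lambda_i$ from $\bar{\Lambda}_i$ cannot destroy down-closedness: any element strictly below an element of $\dot{\Lambda}_i$ has degree $<p+1$ and, if of degree $p$, cannot coincide with a source $x_j\in\Lambda_i$.

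Since $X$ is homologically admissible it is cellular by Lemma \ref{lemma:homologiaclly_admissible_is_cellular}, and a down-closed subposet $A\subseteq X$ inherits both the grading and the sets $\widehat{U}_x$ (for $x\in A$ one has $\widehat{U}_x^A=\widehat{U}_x^X$, as everything below $x$ already lies in $A$). Consequently $\bar{\Lambda}_i$ and $\dot{\Lambda}_i$ are again cellular, and Theorem \ref{thm:cellular_homology_isomorphis_homology_cellular_posets} applies to each of them.

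Next I would pass to the relative complex. Because $\dot{\Lambda}_i$ is down-closed in $\bar{\Lambda}_i$, the differential $d(x)=\sum_{w\prec x}\epsilon(x,w)w$ restricts to a subcomplex $C_*(\dot{\Lambda}_i)\subset C_*(\bar{\Lambda}_i)$, and the quotient $C_*(\bar{\Lambda}_i)/C_*(\dot{\Lambda}_i)$ computes $H_*(\bar{\Lambda}_i,\dot{\Lambda}_i)$: comparing the long exact sequence of the short exact sequence of complexes with the long exact sequence of the pair, and using Theorem \ref{thm:cellular_homology_isomorphis_homology_cellular_posets} together with the five lemma, yields the identification. The key point is that the generators of this relative complex in degree $k$ are exactly the degree-$k$ elements of $\bar{\Lambda}_i-\dot{\Lambda}_i=\Lambda_i$. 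Hence, if $\Lambda_i$ is a union of cycles of index $p$ the relative complex is concentrated in degrees $p$ and $p+1$, giving $H_k(\bar{\Lambda}_i,\dot{\Lambda}_i)=0$ for $k\neq p,p+1$; and if $\Lambda_i=\{x^{(p)}\}$ is a single critical point the relative complex is $R$ in degree $p$ with trivial differential and zero elsewhere, so that $H_p=R$ and $H_k=0$ for $k\neq p$.

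The main obstacle is establishing that $\dot{\Lambda}_i$ is down-closed, which is exactly what makes the cellular machinery applicable; this is where the structure of a cycle of index $p$ (all top cells being targets inside $\Lambda_i$) is used in an essential way. Once that is secured, the only remaining care is the routine identification of the quotient cellular complex with the relative singular homology, which follows from Minian's theorem applied to the cellular posets $\bar{\Lambda}_i$ and $\dot{\Lambda}_i$.
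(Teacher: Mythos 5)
Your proof is correct, but it takes a genuinely different route from the paper. The paper works with the singular homology long exact sequence of the pair $(\bar{\Lambda}_i,\dot{\Lambda}_i)$ and does a diagram chase: it kills $H_k$ for $k\leq p-2$ using that $H_k(\dot{\Lambda}_i)\to H_k(\bar{\Lambda}_i)$ is an isomorphism there, and handles $k=p-1$ separately by showing $H_{p-1}(\dot{\Lambda}_i)\to H_{p-1}(\bar{\Lambda}_i)$ is surjective ``by the construction of cellular homology''; the vanishing for $k\geq p+2$ is left implicit in the remark that one may use cellular homology. You instead pass directly to the relative cellular chain complex $C_*(\bar{\Lambda}_i)/C_*(\dot{\Lambda}_i)$, whose degree-$k$ part is free on the degree-$k$ elements of $\Lambda_i$, so that all vanishing statements (both below $p$ and above $p+1$) and the computation for a single critical point drop out in one stroke. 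This is arguably cleaner and more uniform, and you are more careful than the paper on a point it glosses over, namely that $\bar{\Lambda}_i$ and $\dot{\Lambda}_i$ are down-closed, hence cellular, so that Theorem \ref{thm:cellular_homology_isomorphis_homology_cellular_posets} applies (your degree analysis of $\bar{\Lambda}_i$ is exactly what is needed, and it tacitly uses that all cycles in a single basic set have the same index, which holds because a point cannot be both a source and a target of the matching). The one step you treat lightly is the identification of the homology of the quotient cellular complex with relative singular homology via the five lemma: this requires the naturality of Minian's isomorphism with respect to inclusions of open subposets and compatibility with connecting maps. That is standard and at the same level of rigor as the paper's own appeal to ``cellular homology'' for pairs, so it is not a gap, but it is the place where your argument leans on unproved bookkeeping.
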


\begin{proof}
	For convenience, during the proof we will denote $\Lambda_i=\Lambda$. Since all the posets involved are cellular we can use cellular homology. Consider the Homology Long Exact Sequence for the pair $(\bar{\Lambda},\dot{\Lambda})$:
	$$
	\begin{tikzcd}[cramped]
	\cdots \arrow[r, ""] & H_{p}(\bar{\Lambda})    \arrow[r, ""] & H_{p}(\bar{\Lambda},\dot{\Lambda}) \arrow[r, ""] & H_{p-1}(\dot{\Lambda}) 
	\ar[out=-15, in=150]{dll}\\
	& H_{p-1}(\bar{\Lambda})    \arrow[r, "j"] & H_{p-1}(\bar{\Lambda},\dot{\Lambda}) \arrow[r, "\partial"] & H_{p-2}(\dot{\Lambda})  
	\ar[out=-15, in=150]{dll}{\cong}\\
	& H_{p-2}(\bar{\Lambda})    \arrow[r, ""] & H_{p-2}(\bar{\Lambda},\dot{\Lambda}) \arrow[r, ""] & H_{p-3}(\dot{\Lambda}) \arrow[r, ""] & \cdots \\
	\end{tikzcd}
	$$
	First of all, the homomorphism $H_{k}(\dot{\Lambda})\to H_{k}(\bar{\Lambda})$ is an isomorphism for $k\leq p-2$, so  $H_{k}(\bar{\Lambda},\dot{\Lambda})=0$ for $k\leq p-2$. Second, we have that: $$H_{p-1}(\bar{\Lambda},\dot{\Lambda})=\ker \partial=\Imm j\cong \frac{H_{p-1}(\bar{\Lambda})}{\ker j}\cong \frac{H_{p-1}(\bar{\Lambda})}{\Imm i}.$$
	Third, the homomorphism $H_{p-1}(\dot{\Lambda}) \to H_{p-1}(\bar{\Lambda})$ induced by the inclusion is surjective by the construction of cellular homology. Therefore $H_{p-1}(\bar{\Lambda},\dot{\Lambda})=0$. Fourth, if $\Lambda$ is just a critical point $x^{(p)}$, then $H_k(\bar{\Lambda},\dot{\Lambda})=H_k(U_x,\widehat{U}_x)$ and by cellularity of $X$ and the Homology Long Exact Sequence for the pair $(U_x,\widehat{U}_x)$ the result follows.
\end{proof}

We denote by $b_k$ the Betti number of dimension $k$ with coefficients in the principal domain $R$.

Taking into account the ideas involved in the proof of \cite[Theorem 3.1]{Forman3}  and our Theorems \ref{thm:homological_collapse_thm_Lyapunov} and \ref{thm:homot_poset_crit_points_Lyapunov} yields the Strong Morse-Bott inequalities:

\begin{theorem}[Strong Morse-Bott inequalities]\label{thm_strong_morse__bott_inequalities}
	Let $X$ be a homologically admissible poset and let $\mathcal{M}$ be a matching on $X$. Then, for every $k\geq 0$:
	$$m_k-m_{k-1}+\cdots +(-1)^k m_0\geq b_k-b_{k-1}+\cdots +(-1)^k b_0.$$
\end{theorem}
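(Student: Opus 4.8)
The plan is to run the classical filtration argument for Morse inequalities, using the two Fundamental Theorems above to control the passage across a critical value and a standard algebraic subadditivity lemma to do the bookkeeping. First I would reduce to the case of one basic set per critical value. As in the proof of Theorem \ref{thm:homot_poset_crit_points_Lyapunov}, I pass to the function $\tilde f$ induced by $f$ on $X/\sim$ and perturb it to be injective on equivalence classes; this affects neither $m_k$ nor $b_k$, so I may assume the basic sets $\Lambda_1,\dots,\Lambda_k$ are ordered by strictly increasing critical value. Choosing regular values $a_0<a_1<\cdots<a_k$ with exactly the critical value of $\Lambda_j$ lying in $(a_{j-1},a_j)$ produces a filtration
$$\emptyset=X_{a_0}\subset X_{a_1}\subset\cdots\subset X_{a_k}=X.$$
By Theorem \ref{thm:homological_collapse_thm_Lyapunov} the inclusion across any interval containing no critical value is a homology isomorphism, so all the homological content is concentrated at the critical levels and $H_*(X_{a_k})\cong H_*(X)$.

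The key step is to identify the relative homology of each stage of the filtration. By Theorem \ref{thm:homot_poset_crit_points_Lyapunov} we have $X_{a_j}=X_{a_{j-1}}\cup_{\partial[\Lambda_j]}\Lambda_j$, and Lemma \ref{lema:technical_Lyapunov} guarantees that the elements of a cycle $\Lambda_j$ are attached to $X_{a_{j-1}}$ only through $\partial\Lambda_j$. Excising the remainder of $X_{a_{j-1}}$ (using the excision property for poset homology, cf. \cite{SamQuiDaSVil}) should then yield
$$H_*(X_{a_j},X_{a_{j-1}})\cong H_*(\bar{\Lambda}_j,\dot{\Lambda}_j),$$
with the case of a single critical point handled uniformly by the same formula. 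Summing over $j$ gives $\sum_{j}\rank H_i(X_{a_j},X_{a_{j-1}})=m_i$ for every $i$, directly from the definition of $m_i$.

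Finally I would invoke the standard algebraic lemma (see e.g. \cite{Milnor}): the truncated alternating rank function $\phi_k(A,B)=\sum_{i=0}^k(-1)^{k-i}\rank H_i(A,B)$ is subadditive on triples $C\subseteq B\subseteq A$, that is, $\phi_k(A,C)\le\phi_k(A,B)+\phi_k(B,C)$, which follows from the long exact sequence of the triple by comparing ranks. Applying subadditivity iteratively along the filtration gives
$$\sum_{i=0}^k(-1)^{k-i}b_i=\phi_k(X,\emptyset)\le\sum_{j=1}^k\phi_k(X_{a_j},X_{a_{j-1}})=\sum_{i=0}^k(-1)^{k-i}m_i,$$
which is precisely the asserted inequality.

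The main obstacle is the excision identification of the second paragraph: one must verify carefully, using Lemma \ref{lema:technical_Lyapunov} together with the homological admissibility of $X$, that crossing the critical value of a cycle $\Lambda_j$ alters the homology exactly by $H_*(\bar{\Lambda}_j,\dot{\Lambda}_j)$ and that no incidences between $\Lambda_j$ and $X_{a_{j-1}}$ outside $\partial\Lambda_j$ interfere. Once this identification is in place, the purely algebraic subadditivity step is routine.
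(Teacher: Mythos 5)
Your proposal is correct and follows essentially the same route as the paper, which proves this theorem simply by invoking the ideas of Forman's proof of his Theorem 3.1 together with Theorems \ref{thm:homological_collapse_thm_Lyapunov} and \ref{thm:homot_poset_crit_points_Lyapunov}: filter $X$ by sublevel sets separating the critical values, identify $H_*(X_{a_j},X_{a_{j-1}})$ with $H_*(\bar{\Lambda}_j,\dot{\Lambda}_j)$, and conclude by the standard subadditivity of the truncated alternating sum of ranks. The excision step you flag as the main obstacle is precisely what the paper leaves implicit, and it goes through at the level of cellular chain complexes since $X_{a_j}-X_{a_{j-1}}=\Lambda_j=\bar{\Lambda}_j-\dot{\Lambda}_j$ with the same incidence numbers on both sides.
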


From the standard argument (see \cite[p. 30]{Milnor}), we obtain the Weak Morse inequalities:

\begin{corollary}[Weak Morse-Bott inequalities] \label{coro:weak_morse_ineq}
	Let $X$ be a homologically admissible poset and let $\mathcal{M}$ be a matching on $X$. Then: \begin{enumerate}
		\item For every $k \geq 0$, $m_k\geq b_k$.
		\item $\chi(X)=\sum_{i=0}^{\deg(X)}(-1)^k b_k=\sum_{i=0}^{\deg(X)}(-1)^k m_k.$
	\end{enumerate}
\end{corollary}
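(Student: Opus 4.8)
The plan is to derive both statements from the Strong Morse-Bott inequalities (Theorem \ref{thm_strong_morse__bott_inequalities}) by the standard telescoping passage of \cite[p.~30]{Milnor}. I would write $S_k = m_k - m_{k-1} + \cdots + (-1)^k m_0$ and $T_k = b_k - b_{k-1} + \cdots + (-1)^k b_0$ for the partial alternating sums, with the convention $S_{-1} = T_{-1} = 0$. With this notation Theorem \ref{thm_strong_morse__bott_inequalities} is exactly the assertion that $S_k \geq T_k$ for every $k \geq 0$. The key bookkeeping identity is $S_k + S_{k-1} = m_k$ (and likewise $T_k + T_{k-1} = b_k$), which holds because every intermediate term cancels between two consecutive partial sums.

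For part (1), I would simply add the inequalities at levels $k$ and $k-1$: since $S_k \geq T_k$ and $S_{k-1} \geq T_{k-1}$, the identity above yields $m_k = S_k + S_{k-1} \geq T_k + T_{k-1} = b_k$.

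For part (2), I would first invoke the usual Euler-Poincar\'e relation $\chi(X) = \sum_k (-1)^k b_k$: since $X$ is cellular, Theorem \ref{thm:cellular_homology_isomorphis_homology_cellular_posets} permits computing homology from the cellular chain complex, and the alternating sum of the ranks of a finitely generated chain complex over a PID agrees with the alternating sum of the ranks of its homology. It then remains to prove $\sum_k (-1)^k m_k = \sum_k (-1)^k b_k$. Setting $n = \deg(X)$, so that $m_k = b_k = 0$ for $k > n$, we have $S_{n+1} = -S_n$ and $T_{n+1} = -T_n$. The strong inequality at level $n$ reads $S_n \geq T_n$, whereas at level $n+1$ it reads $-S_n \geq -T_n$, that is $S_n \leq T_n$. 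Hence $S_n = T_n$, and since $S_n = (-1)^n \sum_k (-1)^k m_k$ and $T_n = (-1)^n \sum_k (-1)^k b_k$, the desired equality follows after cancelling the sign $(-1)^n$.

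The whole derivation is routine once the strong inequalities are available; the only point requiring genuine care is the transition at the top degree, where one must feed the vacuous inequality at level $n+1$ back into the one at level $n$ in order to upgrade the single inequality $S_n \geq T_n$ into an \emph{equality} of Euler characteristics. I would also pause to confirm that an arbitrary PID of coefficients causes no difficulty: $b_k$ is defined as $\rank H_k$, and torsion contributions drop out of every alternating sum of ranks, so the Euler-Poincar\'e identity is unaffected.
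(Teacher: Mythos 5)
Your proposal is correct and is precisely the argument the paper invokes: the paper gives no details, simply citing ``the standard argument (see \cite[p.~30]{Milnor})'', which is exactly the telescoping of the strong inequalities via $S_k+S_{k-1}=m_k$ and the vanishing above top degree that you carry out. Nothing to add.
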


\subsection{Morse-Smale matchings}\label{subsec:Morse_Smale}
	
In this section we generalise \cite[Section 7]{Forman2} to the context of homologically admissible posets while improving some of the results even in the case of simplicial or regular CW-complexes.

Let $X$ be a homologically admissible poset and let $\mathcal{M}$ be a Morse-Smale matching on $X$. We denote by $c_k$ the number os critical points of index $k$ and by $A_k$ the number of prime closed $\mathcal{M}$-orbits of index $k$. Denote by $\mu_k$ the minimum number of generators of the torsion subgroup $T_k$ of $H_k(X)$.

Combining the proof of \cite[Theorem 7.1]{Forman2} with our Pitcher strengthening of Morse inequalities \cite[Corollary 5.2.3]{SamQuiDaSVil} we obtain the following improvement of \cite[Theorem 7.1]{Forman2}, taking torsion into account.

\begin{theorem}\label{thm:morse_bott_inequalities_number_orbits}
	Let $X$ be a homologically admissible poset and let $\mathcal{M}$ be a Morse-Smale matching on $X$. Let the coefficients $R$ be a principal ideal domain. Then, for every $k\geq 0$:
	$$A_k+\sum_{i=0}^k (-1)^{i}c_{k-i} \geq \mu_k + \sum_{i=0}^k (-1)^{i}b_{k-i}.$$
\end{theorem}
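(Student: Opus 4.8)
The plan is to deduce the inequality from the strong Morse-Bott inequalities of Theorem \ref{thm_strong_morse__bott_inequalities} together with the torsion-sensitive Pitcher strengthening \cite[Corollary 5.2.3]{SamQuiDaSVil}, after making the Morse-Bott numbers $m_k$ explicit under the Morse-Smale hypothesis. This is precisely the way in which the present statement refines \cite[Theorem 7.1]{Forman2}: Forman's argument identifies how critical points and closed orbits enter the Morse-Bott count, while the Pitcher strengthening supplies the extra torsion term $\mu_k$. The key point is that for a Morse-Smale matching every basic set is either a critical point or a prime closed $\mathcal{M}$-orbit, so the Lemma above lets me evaluate each summand $\rank H_k(\bar{\Lambda}_i,\dot{\Lambda}_i)$ of $m_k$ basic set by basic set.

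First I would compute these contributions. By the Lemma, a critical point $x^{(p)}$ contributes $R$ to $H_p(\bar{\Lambda}_i,\dot{\Lambda}_i)$ and nothing in other degrees, so critical points account for $c_k$ in $m_k$. For a prime closed $\mathcal{M}$-orbit of index $p$ the Lemma already gives $H_k(\bar{\Lambda}_i,\dot{\Lambda}_i)=0$ unless $k\in\{p,p+1\}$; I would then show that $\rank H_p(\bar{\Lambda}_i,\dot{\Lambda}_i)=\rank H_{p+1}(\bar{\Lambda}_i,\dot{\Lambda}_i)=1$, that is, a prime orbit behaves homologically like a circle sitting in degrees $p$ and $p+1$. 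Summing over all basic sets then yields the identity
$$m_k=c_k+A_k+A_{k-1},$$
where orbits of index $k$ enter $m_k$ through their degree-$p$ part and orbits of index $k-1$ through their degree-$(p+1)$ part.

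Next I would insert this identity into the alternating sums appearing in Theorem \ref{thm_strong_morse__bott_inequalities}. A telescoping computation shows that the two orbit contributions collapse to a single term,
$$\sum_{i=0}^k(-1)^i m_{k-i}=\sum_{i=0}^k(-1)^i c_{k-i}+\sum_{i=0}^k(-1)^i(A_{k-i}+A_{k-i-1})=A_k+\sum_{i=0}^k(-1)^i c_{k-i},$$
all terms except $A_k$ cancelling in consecutive pairs (using $A_{-1}=0$). It then remains to apply the Pitcher strengthening over the principal ideal domain $R$, which bounds the left-hand side below by $\mu_k+\sum_{i=0}^k(-1)^i b_{k-i}$; combining this with the displayed identity gives exactly the claimed inequality.

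The step I expect to be the main obstacle is the homological computation for a prime closed orbit, specifically the equality $\rank H_{p+1}(\bar{\Lambda}_i,\dot{\Lambda}_i)=1$. This is where primality and the cyclic structure of the $\mathcal{M}$-paths are essential: one must check that the cells of a single prime orbit assemble, in the cellular chain complex of the pair $(\bar{\Lambda}_i,\dot{\Lambda}_i)$, into a subcomplex whose homology is that of $S^1$ placed in degrees $p$ and $p+1$, and that pairwise disjoint prime orbits contribute independently. A second point requiring care is that the Pitcher inequalities of \cite{SamQuiDaSVil}, formulated there for Morse matchings, apply to the Morse-Bott numbers $m_k$; this should follow from the Fundamental Theorems \ref{thm:homological_collapse_thm_Lyapunov} and \ref{thm:homot_poset_crit_points_Lyapunov}, which yield a free chain complex over $R$ of ranks $m_k$ computing $H_*(X)$, to which the purely algebraic Pitcher argument over a PID then applies.
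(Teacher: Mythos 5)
Your telescoping computation is correct, but the step you yourself single out as the ``main obstacle'' is where the argument genuinely breaks, and it cannot be repaired in the form you propose. For a prime closed $\mathcal{M}$-orbit $\Lambda$ of index $p$ with $r$ matched pairs, the relative cellular complex of $(\bar{\Lambda},\dot{\Lambda})$ is a single map $d\co R^r\to R^r$ with $d y_i=\epsilon(y_i,x_i)x_i+\epsilon(y_i,x_{i+1})x_{i+1}$, and one computes $\det d=\pm(1-m)$ where $m$ is the multiplicity of the orbit. Hence $\rank H_p(\bar{\Lambda},\dot{\Lambda})=\rank H_{p+1}(\bar{\Lambda},\dot{\Lambda})=1$ only when $m=1$; when $m=-1$ one gets $H_{p+1}=0$ and $H_p\cong R/2R$, both of rank zero over a PID in which $2\neq 0$. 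So the identity $m_k=c_k+A_k+A_{k-1}$ is false in general (it would read $m_k=c_k+A'_k+A'_{k-1}$ with the multiplicity-one counts $A'_k$ of Theorem \ref{thm:morse_bott_inequalities_multiplicity _one_orbits}), and the route through Theorem \ref{thm_strong_morse__bott_inequalities} only yields the weaker inequality with $A'_k$ in place of $A_k$ and without the $\mu_k$ term. The second point you flag also fails, not merely ``requires care'': since the groups $H_*(\bar{\Lambda}_i,\dot{\Lambda}_i)$ can have torsion, the filtration does \emph{not} produce a free chain complex of ranks $m_k$ computing $H_*(X;R)$, and the Pitcher inequality applied to the unperturbed Morse--Bott numbers is simply wrong: in the model of $\mathbb{R}P^2$ of Figure \ref{fig:RP2} the index-one orbit has multiplicity $-1$ (this is what produces the $2$-torsion in $H_1$), so $m_2=0$ while $b_2+\mu_2+\mu_1=1$.

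The paper's proof sidesteps both problems by perturbing the matching first, exactly as in the proof of Theorem \ref{thm:LS_theorem_premium_version}: deleting one matched edge from each prime closed orbit turns $\mathcal{M}$ into a Morse (acyclic) matching $\mathcal{M}^*$ whose critical points number $m_p^*=c_p+A_p+A_{p-1}$ in each index $p$, each contributing a free rank-one relative homology group regardless of the multiplicity of the original orbit. The Pitcher strengthening \cite[Corollary 5.2.3]{SamQuiDaSVil} is then applied to $\mathcal{M}^*$, for which the invariant Morse complex is genuinely free with $m_k^*$ generators, and your telescoping identity $\sum_{i=0}^k(-1)^im^*_{k-i}=A_k+\sum_{i=0}^k(-1)^ic_{k-i}$ finishes the proof. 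You should replace your homological computation of the Morse--Bott numbers of $\mathcal{M}$ by this perturbation step.
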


\begin{definition}
	Let $X$ be a homologically admissible poset and let $\mathcal{M}$ be a Morse-Smale matching on $X$. Endow each element of $X$ with an orientation.  Let $\gamma$ be an $\mathcal{M}$-path
	$$\gamma \co x_0^{(p)}\prec y_0^{(p+1)} \succ x_1^{(p)}\prec y_1^{(p+1)} \succ \cdots \prec y_{r-1}^{(p+1)} \succ x_r^{(p)}.$$ 
	We define the multiplicity of $\gamma$ by $$\prod_{i=0}^{r-1}-\langle d_{(p+1)}y_i,x_i\rangle_p \langle d_{(p+1)}y_i,x_{i+1}\rangle_p$$
	where $d$ is the cellular boundary operator and $\langle \bullet,\bullet \rangle_p$ is the inner product on $C_p(X)$ such that the degree $p$ elements of $X$ are mutually orthogonal.
\end{definition}

\begin{remark}
	Observe that the multiplicity of a path is always $1$ or $-1$ due to Lemma \ref{lema:homolog_admissible_incidence_numbers}.
\end{remark}

\begin{remark}
	The generalization of \cite[Lemma 4.6]{Forman3} to our context is straightforward.
\end{remark}

Both \cite[Theorem 7.3]{Forman3} and \cite[Corollary 7.4]{Forman3} generalise to our setting with the same proofs:

\begin{theorem}\label{thm:morse_bott_inequalities_multiplicity _one_orbits}
	Let $X$ be a homologically admissible poset and let $\mathcal{M}$ be a Morse-Smale matching on $X$. Let the coefficients be the field $\R$. Denote by $A'_p$ the number of closed $\mathcal{M}$-orbits of index $p$ and multiplicity 1. Then, for every $k\geq 0$:
	$$A'_k+\sum_{i=0}^k (-1)^{i}c_{k-i} \geq \sum_{i=0}^k (-1)^{i}b_{k-i}(\R).$$
\end{theorem}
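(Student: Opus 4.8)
The plan is to follow the argument of \cite[Theorem 7.3]{Forman3}: convert the Morse--Smale matching $\mathcal{M}$ into a genuine Morse (acyclic) matching by ``opening up'' each prime closed $\mathcal{M}$-orbit into a pair of critical cells, and then, working over the field $\R$, cancel exactly those new pairs that come from orbits of multiplicity $-1$. Concretely, for each prime closed $\mathcal{M}$-orbit
$$\gamma \co x_0^{(p)}\prec y_0^{(p+1)} \succ x_1^{(p)}\prec \cdots \succ x_r^{(p)}=x_0^{(p)},$$
I would single out the matched pair $(x_0,y_0)$ and delete it from $\mathcal{M}$. Since by the Morse--Smale hypothesis the chain recurrent set consists of critical points together with \emph{pairwise disjoint} prime closed orbits, deleting one edge from each orbit simultaneously yields a matching $\mathcal{M}'$ whose modified Hasse diagram $\mathcal{H}_{\mathcal{M}'}(X)$ is acyclic, i.e. a Morse matching. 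Its critical cells are the original ones together with one cell $x_0^{(p)}$ and one cell $y_0^{(p+1)}$ for each orbit of index $p$, so the number $c'_k$ of critical cells of $\mathcal{M}'$ of index $k$ satisfies $c'_k=c_k+A_k+A_{k-1}$.

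The heart of the argument is the analysis, in the Morse complex $(C_*(\mathcal{M}'),\partial')$ with coefficients in $\R$, of the incidence $\langle \partial' y_0, x_0\rangle$ between the two critical cells created from $\gamma$. Using the generalization of \cite[Lemma 4.6]{Forman3} together with Lemma \ref{lema:technical_Lyapunov} (so that the gradient paths relevant to $x_0$ remain confined to the disjoint orbit $\gamma$), there are exactly two $\mathcal{M}'$-paths from $y_0$ to $x_0$: the direct face relation $y_0\succ x_0$, contributing $\langle d y_0,x_0\rangle=\pm1$, and the path running once around the broken orbit $y_0\succ x_1\prec y_1\succ\cdots\succ x_r=x_0$. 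By Lemma \ref{lema:homolog_admissible_incidence_numbers} every incidence number is $\pm1$, and a sign computation identifies the contribution of the long path with $-\mathrm{mult}(\gamma)\langle d y_0,x_0\rangle$, so that
$$\langle \partial' y_0,x_0\rangle=\langle d y_0,x_0\rangle\bigl(1-\mathrm{mult}(\gamma)\bigr).$$
This coefficient vanishes when $\mathrm{mult}(\gamma)=1$ and equals $\pm2$ when $\mathrm{mult}(\gamma)=-1$. It is precisely here that the field $\R$ enters: $2$ is invertible, so each orbit of multiplicity $-1$ produces a cancellable critical pair $(x_0,y_0)$, whereas orbits of multiplicity $1$ do not. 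Performing the corresponding algebraic cancellations one orbit at a time — legitimate because the orbits are pairwise disjoint — gives a finitely generated free chain complex over $\R$, still computing $H_*(X;\R)$, whose number of generators of index $k$ is $c''_k=c_k+A'_k+A'_{k-1}$, the surviving pairs being exactly those of multiplicity $1$.

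Finally I would apply the strong Morse inequalities in their purely algebraic form for free chain complexes over a field (the algebraic input to Theorem \ref{thm_strong_morse__bott_inequalities}; cf.\ \cite[p.~30]{Milnor}) to this reduced complex, obtaining $\sum_{i=0}^k(-1)^i c''_{k-i}\geq \sum_{i=0}^k(-1)^i b_{k-i}(\R)$. The alternating sums telescope: since $\sum_{i=0}^k(-1)^i\bigl(A'_{k-i}+A'_{k-i-1}\bigr)=A'_k$, the left-hand side collapses to $A'_k+\sum_{i=0}^k(-1)^i c_{k-i}$, which is exactly the asserted inequality. I expect the main obstacle to be the sign bookkeeping in the displayed incidence formula — checking that the around-the-orbit contribution is exactly $-\mathrm{mult}(\gamma)\langle d y_0,x_0\rangle$ under the multiplicity conventions, so that multiplicity $1$ corresponds to a vanishing, non-cancellable coefficient — together with verifying that the successive cancellations over distinct disjoint orbits do not interfere.
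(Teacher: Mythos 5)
Your proposal is correct and follows essentially the same route as the paper, which proves this theorem simply by asserting that Forman's proof of \cite[Theorem 7.3]{Forman3} carries over verbatim to homologically admissible posets; you have in fact reconstructed that argument (splitting each prime closed orbit into a critical pair, computing $\langle \partial' y_0,x_0\rangle=\langle d y_0,x_0\rangle(1-\mathrm{mult}(\gamma))$ via Lemmas \ref{lema:technical_Lyapunov} and \ref{lema:homolog_admissible_incidence_numbers}, cancelling the multiplicity $-1$ pairs over $\R$, and applying the strong Morse inequalities with the telescoping of $A'_{k-i}+A'_{k-i-1}$) in more detail than the paper itself provides. The one point you flag -- non-interference of successive cancellations -- is handled exactly as in Forman by ordering the disjoint orbits via the Lyapunov function of Theorem \ref{thm:integration_matchings}, so no genuine gap remains.
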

	
\begin{remark}
	While \cite[Corollary 7.4]{Forman3} refined \cite[Theorem 7.2]{Forman3}, Theorem \ref{thm:morse_bott_inequalities_multiplicity _one_orbits} does not refine our improved Theorem \ref{thm:morse_bott_inequalities_number_orbits}. They are complementary results. 
\end{remark}

\section{Homological Lusternik-Schnirelmann Theorem}\label{sec:homological_lusternik-schnirelmann_theorem}

The purpose of this section is to prove a Lusternik-Schnirelmann Theorem for general matchings and a suitable definition of homological category. 

\subsection{Definition of the homological chain category and first properties}
Let $(C_*,\partial)$ denote a free chain complex of abelian groups such that each term $C_p$ is finitely generated and only finitely many of the $C_p$ are non zero. We define the {\em rank} of $C_*$ as $\rank(C_*)=\sum_p\rank(C_p)$.

\begin{definition}\label{def:homological_chain_category_for_chain_complexes}
	Let $(C_*,\partial)$ be a free chain complex of abelian groups. We define its {\em homological chain  category}
	\begin{equation*}
		\HCcat(C_*)=\inf\Bigg\{
		\begin{aligned}
			\rank(B_*)\co  B_* \text{ bounded subcomplex of } C_* \text{ and the } &\\ \text{ inclusion } i\co B_* \hookrightarrow C_* \text{ is a quasi-isomorphism.} 
		\end{aligned} \Bigg\}
	\end{equation*}
	
\end{definition}

Let $X$ be a topological space. For all the definitions that follow we consider coefficients in $\Z$. We denote by $S_*(X)$ its singular chain complex.

\begin{definition}
	Let $X$ be a topological space. We define its {\em homological chain  category} $\HCcat(X)=\HCcat(S_*(X))$.
\end{definition}

 We introduce a homological lower bound for $\HCcat(X)$ analogous to the Pitcher strengthening of Morse inequalities.

\begin{prop}\label{prop:homological_lower_bound_HCcat}
	Let $X$ be a topological space with finitely generated homology. Then $$\sum_k b_k + 2\sum_k \mu_k \leq \HCcat(X).$$
\end{prop}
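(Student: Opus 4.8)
The plan is to prove this purely algebraically at the level of the chain complex, since $\HCcat(X) = \HCcat(S_*(X))$ and the singular chain complex of $X$ is a free chain complex over $\Z$ with finitely generated homology. Let $B_* \hookrightarrow S_*(X)$ be any bounded subcomplex whose inclusion is a quasi-isomorphism, so that $H_*(B_*) \cong H_*(X)$. The goal is to show that any such $B_*$ must satisfy $\rank(B_*) \geq \sum_k b_k + 2\sum_k \mu_k$, after which taking the infimum gives the result. The essential point is a structure theorem for finitely generated free chain complexes over a PID: by the classification of finitely generated modules, the homology $H_k(B_*) = H_k(X)$ splits as $R^{b_k} \oplus T_k$ where $T_k$ has $\mu_k$ torsion generators, and I want to read off a lower bound on $\rank(B_p) = \sum_p \rank(B_p)$ from this data.

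\medskip

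First I would recall the standard splitting for a free chain complex $(B_*,\partial)$ over a PID. Writing $Z_k = \ker\partial_k$ and $\Imm_k = \Imm\partial_{k+1}$, each $B_k$ decomposes (non-canonically) so that $\rank(B_k) = \rank(Z_k) + \rank(\partial_k B_k)$, and the rank of the boundary map $\partial_k$ counts the columns contributing both free and torsion homology. The key bookkeeping is this: a free generator of $H_k$ (contributing to $b_k$) requires one generator of $B_k$ that survives to homology, while each torsion summand $R/(d)$ of $T_k$ is forced to arise from a boundary relation, which costs \emph{two} generators of the free complex --- one generator in degree $k$ and one in degree $k+1$ that maps onto $d$ times it. This is exactly where the factor of $2$ in front of $\sum_k \mu_k$ comes from, and it mirrors the Pitcher strengthening of the Morse inequalities referenced just before the statement. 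Concretely, I would argue that the smith-normal-form of $\partial$ shows $\rank(\partial) = \sum_k \mu_k$ (the number of non-unit invariant factors), and that $\sum_k \rank(Z_k)$ already accounts for $\sum_k b_k + \sum_k \mu_k$, so that
$$
\rank(B_*) = \sum_k \rank(Z_k) + \sum_k \rank(\Imm_{k-1}) \geq \Big(\sum_k b_k + \sum_k \mu_k\Big) + \sum_k \mu_k = \sum_k b_k + 2\sum_k \mu_k.
$$

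\medskip

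The cleanest way to organize this is to pass to the minimal model of $B_*$: since $B_*$ is a bounded free complex over the PID $R = \Z$, it is chain-homotopy equivalent to a complex whose differentials are given, in suitable bases, by the invariant factors, and one may discard all the ``acyclic'' free summands $(R \xrightarrow{1} R)$ without changing homology. What remains after this reduction is a complex realizing the homology with the \emph{smallest} possible number of generators, namely $b_k$ generators with zero differential for the free part and, for each torsion factor, one generator in degree $k$ and one in degree $k+1$ joined by multiplication by the invariant factor. Counting generators of this minimal realization gives precisely $\sum_k b_k + 2\sum_k \mu_k$, and since the reduction only deletes generators, the original $B_*$ has at least that rank. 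Taking the infimum over all valid $B_*$ yields the proposition.

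\medskip

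The main obstacle I anticipate is making the ``two generators per torsion summand'' count fully rigorous for an arbitrary quasi-isomorphic subcomplex, rather than for the idealized minimal model: a given $B_*$ need not be in Smith normal form, so I must be careful that the reduction to the minimal model genuinely does not increase the rank and that cancelling the contractible pieces preserves the quasi-isomorphism to $S_*(X)$. The honest lower bound should be phrased intrinsically, as I did above via $\rank(B_*) = \sum_k \rank(Z_k) + \sum_k \rank(\Imm_k)$ together with $\rank(Z_k) = b_k + (\text{torsion contributions})$ and $\rank(\Imm_k) \geq \mu_{k}$, so that no appeal to a chosen good basis of the particular $B_*$ is needed --- only the ranks, which are basis-independent. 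Once the rank identity and the torsion count are established intrinsically, the infimum is immediate and the bound is forced.
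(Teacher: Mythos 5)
Your argument is correct and takes essentially the same route as the paper: the paper reduces to the degreewise inequality $b_k+\mu_k+\mu_{k-1}\le \rank(B_k)$ (cited as standard algebra from Prasolov) and sums over $k$, while you additionally supply the proof of that inequality via $\rank(B_k)=\rank(Z_k)+\rank(\Imm\,\partial_k)$, $\rank(Z_k)\ge b_k+\mu_k$ and $\rank(\Imm\,\partial_k)\ge\mu_{k-1}$. The only slip is the parenthetical claim that $\rank(\partial)$ \emph{equals} the number of non-unit invariant factors (unit invariant factors also contribute to that rank), but since only the inequality $\ge$ is used, this does not affect the argument.
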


\begin{proof}
Let us denote by $(B_*,\partial)$ a bounded chain  complex whose homology is isomorphic to $H_*(X)$. By standard algebra (see, for example \cite[Theorem 4.11]{Prasolov}), we have $b_k+\mu_k+\mu_{k-1}\leq \rank(B_k)$. Now the result follows by a sum indexed by the dimension.
\end{proof}
	
\begin{cor}
Let $X$ be a homologically admissible poset or a CW-complex with finitely generated homology. Then $$\chi(X) \leq \HCcat(X).$$
\end{cor}

In fact, the bound given by Proposition \ref{prop:homological_lower_bound_HCcat} is the best possible as a consequence of the following result due to Pitcher \cite[Lemma 13.2]{Pitcher}.

\begin{prop}
	Let $(C_*,\partial)$ be a free chain complex with singular homology groups $H_k(X)$, $k=0,1,\ldots$ Denote by $\mu_k$ the minimum number of generators of the torsion subgroup $T_k$ of $H_k(X)$ and denote by $b_k$ the rank of $H_k(X)$. Then there exists a free chain complex $(L,\partial^{L})$ such that:
	\begin{enumerate}
		\item For every $k\geq 0$, the group $L_k$ has rank $b_k+\mu_k+\mu_{k-1}$.
		\item There exists a monomorphism $i\co L\hookrightarrow C$ which is a chain map.
		\item The monomorphism $i\co L\hookrightarrow C$ is a quasi-isomorphism.
	\end{enumerate}  
\end{prop}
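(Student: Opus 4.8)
The plan is to build $L$ explicitly as a subcomplex of $C$ by choosing, in each degree, a basis adapted to the differential and then keeping exactly those basis elements that carry homological information. The essential algebraic input is that the ground ring is a PID, so every submodule of a finitely generated free module is free and every short exact sequence with free quotient splits.

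First, I would split off the cycles in each degree. Writing $Z_k=\Ker\partial_k$ and $B_k=\Imm\partial_{k+1}$, the quotient $C_k/Z_k\cong\Imm\partial_k$ is a submodule of the free module $C_{k-1}$, hence free, so the sequence $0\to Z_k\to C_k\to\Imm\partial_k\to 0$ splits and $C_k=Z_k\oplus W_k$ with $\partial|_{W_k}\co W_k\to B_{k-1}$ an isomorphism. Next I would apply the simultaneous-basis theorem (Smith normal form) to the inclusion $B_k\hookrightarrow Z_k$: there is a basis $z_1^{(k)},\ldots,z_{n_k}^{(k)}$ of $Z_k$ together with invariant factors $d_1\mid d_2\mid\cdots\mid d_{r_k}$ such that $d_1z_1^{(k)},\ldots,d_{r_k}z_{r_k}^{(k)}$ is a basis of $B_k$. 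This partitions the basis into three types: the elements $z_i^{(k)}$ with $i>r_k$, whose classes generate the free part $\Z^{b_k}$ of $H_k$; the elements $z_i^{(k)}$ with $i\leq r_k$ and $d_i>1$, whose classes generate the torsion $T_k$ (exactly $\mu_k$ of these); and the elements with $d_i=1$, which are boundaries carrying no homology. For each torsion basis element $d_iz_i^{(k)}\in B_k$ I would fix its preimage $w_i^{(k+1)}\in W_{k+1}\subseteq C_{k+1}$ with $\partial w_i^{(k+1)}=d_iz_i^{(k)}$.

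Then I would define $L_k$ to be the submodule of $C_k$ generated by the $b_k$ free cycles $\{z_i^{(k)}\co i>r_k\}$, the $\mu_k$ torsion cycles $\{z_i^{(k)}\co i\leq r_k,\ d_i>1\}$, and the $\mu_{k-1}$ bounding chains $\{w_j^{(k)}\}$ coming from the torsion of $H_{k-1}$. Since the $z_i^{(k)}$ are part of a basis of $Z_k$ and the $w_j^{(k)}$ are part of the basis of $W_k$ furnished by $\partial|_{W_k}$, these generators are linearly independent in $C_k=Z_k\oplus W_k$, so $L_k$ is free of rank $b_k+\mu_k+\mu_{k-1}$, which is conclusion (1). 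The \emph{crucial} point is that $\partial$ restricts to $L$: the free and torsion cycles are annihilated, while each bounding chain satisfies $\partial w_j^{(k)}=d_jz_j^{(k-1)}$, and this lies in $L_{k-1}$ precisely because $z_j^{(k-1)}$ is one of the chosen torsion cycles. Hence $(L,\partial^L)$ is a subcomplex and $i\co L\hookrightarrow C$ is a chain map, a monomorphism by construction, giving conclusion (2).

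Finally, I would verify (3) by computing $H_*(L)$ directly. On $L_k$ the differential vanishes on the cycles and sends $w_j^{(k)}\mapsto d_jz_j^{(k-1)}$, so $Z_k(L)$ is spanned by the $b_k+\mu_k$ cycles while $B_k(L)$ is spanned by the $\mu_k$ elements $\{d_iz_i^{(k)}\co d_i>1\}$; thus $H_k(L)\cong\Z^{b_k}\oplus\bigoplus_{d_i>1}\Z/d_i\cong H_k(C)$. Because each chosen cycle represents in $C$ exactly the class it represents in $L$, the induced map $i_*$ carries generators to generators of matching order, hence is an isomorphism and $i$ is a quasi-isomorphism. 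I expect the only genuine labor to be bookkeeping: organizing the Smith-normal-form bases coherently across all degrees and confirming that the selected chains close up into a subcomplex. Once those adapted bases are in place, all three conclusions are immediate.
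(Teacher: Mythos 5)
Your construction is correct and is essentially the argument behind the result the paper merely cites (Pitcher's Lemma 13.2) without reproving: split $C_k=Z_k\oplus W_k$ using that $\Imm\partial_k$ is free over a PID, take a stacked (Smith normal form) basis of $B_k\subseteq Z_k$, and keep the free cycles, the torsion cycles, and the $W$-preimages of the torsion boundaries. The only compressed step is the claim that $i_*$ is an isomorphism rather than just a map between abstractly isomorphic groups; this is easily filled in, since any cycle of $L_k$ that bounds in $C$ lies in $B_k$ and hence has all its $z_i^{(k)}$-coefficients ($i>r_k$) zero and its torsion coefficients divisible by the $d_i$, so it already bounds in $L$ via the chosen $w_i^{(k+1)}$, and surjectivity is clear because every homology class of $C$ is represented by a combination of the retained cycles.
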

 
\begin{corollary}\label{coro:exact_value_hccat}
	Let $X$ be a topological space with finitely generated homology. Then $$\HCcat(X)=\sum_k b_k + 2\sum_k \mu_k.$$
\end{corollary}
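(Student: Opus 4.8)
The plan is to obtain the identity by squeezing $\HCcat(X)$ between two coinciding quantities. The inequality $\sum_k b_k + 2\sum_k \mu_k \le \HCcat(X)$ is exactly Proposition \ref{prop:homological_lower_bound_HCcat}, so the whole task reduces to establishing the reverse inequality; that is, to exhibiting a single admissible competitor in the infimum of Definition \ref{def:homological_chain_category_for_chain_complexes} whose rank realizes the right-hand side.

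To produce such a competitor I would apply the preceding result of Pitcher \cite[Lemma 13.2]{Pitcher} to the singular chain complex $C_* = S_*(X)$. This yields a free chain complex $(L,\partial^{L})$ together with a monomorphism $i\co L \hookrightarrow S_*(X)$ which is simultaneously a chain map and a quasi-isomorphism, and for which each term $L_k$ has rank $b_k + \mu_k + \mu_{k-1}$. The first point to check is that $L$ is an \emph{admissible} competitor: since $X$ has finitely generated homology, only finitely many of the $b_k$ and $\mu_k$ are nonzero, so $b_k + \mu_k + \mu_{k-1} = 0$ and hence $L_k = 0$ for all but finitely many $k$; thus $L$ is bounded, and via $i$ it is realized as a bounded subcomplex of $S_*(X)$ whose inclusion is a quasi-isomorphism.

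With $L$ in hand, the definition of $\HCcat$ gives directly
$$\HCcat(X) \le \rank(L) = \sum_k \rank(L_k) = \sum_k \left(b_k + \mu_k + \mu_{k-1}\right).$$
It then remains only to reindex the torsion contribution: because $\mu_{-1}=0$ one has $\sum_k \mu_{k-1} = \sum_k \mu_k$, so the right-hand side collapses to $\sum_k b_k + 2\sum_k \mu_k$. Combining this upper bound with Proposition \ref{prop:homological_lower_bound_HCcat} yields the claimed equality.

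I do not anticipate a serious obstacle here, since the substance of the argument is already carried by the two propositions preceding the statement. The only genuinely delicate point is the verification that Pitcher's complex $L$ qualifies as a competitor in the infimum defining $\HCcat$, namely that the monomorphism $i$ really identifies $L$ with a \emph{bounded} subcomplex rather than merely an abstract chain complex mapping in, which is precisely where the finite generation hypothesis on $H_*(X)$ is used; everything else is the elementary reindexing of the torsion terms.
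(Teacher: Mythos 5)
Your proof is correct and follows exactly the route the paper intends: the lower bound is Proposition \ref{prop:homological_lower_bound_HCcat}, and the matching upper bound comes from applying Pitcher's lemma to $S_*(X)$ and summing the ranks $b_k+\mu_k+\mu_{k-1}$, with the reindexing $\sum_k\mu_{k-1}=\sum_k\mu_k$. Your extra care in checking that Pitcher's complex is a genuine bounded subcomplex (via the image of the monomorphism $i$) is a point the paper leaves implicit, but there is no substantive difference in approach.
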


Moreover, observe that a topological $X$ is acyclic if and only if $\HCcat(X)=1$.

%

As a consequence of \cite[Example 1.33]{CLOT} we have the following result relating the homological chain category to the Lusternik-Schnirelmann category:

\begin{prop}\label{prop:relation_cat_HCcat}
	Let $K$ be a simply connected CW-complex with finitely generated homology groups such that there exists $n$ satisfying $H_n(K)\neq 0$ and $H_p(K)=0$ for $p>n$.  Then
	$$\cat(K)\leq \HCcat(K).$$
\end{prop}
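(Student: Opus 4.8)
The plan is to identify $\HCcat(K)$ with the number of cells in a minimal CW-model of $K$ and then bound $\cat$ by that cell count. By Corollary \ref{coro:exact_value_hccat} we have $\HCcat(K)=\sum_k b_k + 2\sum_k \mu_k=\sum_k\left(b_k+\mu_k+\mu_{k-1}\right)$, and the right-hand side is precisely the total rank $\rank(L)=\sum_k\rank(L_k)$ of the Pitcher minimal chain complex $(L,\partial^{L})$ recalled just before Corollary \ref{coro:exact_value_hccat}, whose degree-$k$ term has rank $b_k+\mu_k+\mu_{k-1}$. So the first reduction is purely algebraic and already available to us.

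First I would invoke the minimal cell structure theorem for simply connected spaces. Since $K$ is simply connected, has finitely generated homology, and satisfies $H_p(K)=0$ for $p>n$, it is homotopy equivalent to a \emph{finite} CW-complex $K'$ whose cellular chain complex is a minimal complex, that is, $K'$ has exactly $b_k+\mu_k+\mu_{k-1}$ cells in dimension $k$; consequently $K'$ has $\sum_k\left(b_k+\mu_k+\mu_{k-1}\right)=\HCcat(K)$ cells in total. The hypothesis that homology vanishes above degree $n$ guarantees finiteness: the only cells above dimension $n$ are the $\mu_n$ cells in dimension $n+1$ needed to create the torsion of $H_n(K)$ (as for a Moore space), so the model is $(n+1)$-dimensional with finitely many cells, and the count is exactly $\HCcat(K)$.

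Next I would apply the classical bound relating Lusternik--Schnirelmann category to the number of cells, which is the content of \cite[Example 1.33]{CLOT}: attaching a single cell raises the category by at most one, so building $K'$ cell by cell starting from a point gives $\cat(K')\le\#\{\text{cells of }K'\}$. Since $\cat$ is a homotopy invariant, $\cat(K)=\cat(K')\le\HCcat(K)$, the desired inequality. Here one must use the normalization of $\cat$ under which a point has category $1$, matching the convention that makes an acyclic space satisfy $\HCcat=1$; with that choice the induction starts from $\cat(\text{pt})=1$ and one cell, and the two counts line up.

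The main obstacle is the second step: the existence of a minimal CW-model whose cell count equals $\HCcat(K)$, i.e.\ the geometric realization of the algebraic minimality of $(L,\partial^{L})$. This is exactly where simple connectivity is indispensable, since it lets the homology decomposition be built with no spurious cells; the corresponding statement fails in the presence of a nontrivial fundamental group. Some care is also required at the top, where torsion in $H_n(K)$ forces the appearance of $(n+1)$-cells, so the minimal model need not be $n$-dimensional; the hypothesis $H_p(K)=0$ for $p>n$ together with the finite generation of the homology is precisely what keeps the model finite and its cell count equal to $\HCcat(K)$.
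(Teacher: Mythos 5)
Your proof is correct and follows essentially the same route as the paper, which simply cites \cite[Example 1.33]{CLOT}: identify $\HCcat(K)=\sum_k(b_k+\mu_k+\mu_{k-1})$ with the cell count of a minimal CW-model (available because $K$ is simply connected with finitely generated homology vanishing above degree $n$), and then use that attaching a cell raises $\cat$ by at most one. Your explicit attention to the unnormalized convention $\cat(\mathrm{pt})=1$ and to the $(n+1)$-cells forced by torsion in $H_n(K)$ is consistent with the paper's conventions, so nothing further is needed.
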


The result does not necessarily hold if we remove the simply connectedness hypothesis, as the following example shows:

\begin{example}
	Consider the Poincar\'{e} homology 3-sphere $M$. Observe that $\HCcat(M)=\HCcat(\mathbb{S}^3)=2$. However, $\cat(M)\geq 3$ \cite{Fox}.
\end{example}

\subsection{Homological Lusternik-Schnirelmann Theorem}

In this subsection we state and prove a Lusternik-Schnirelmann Theorem for the homological chain category and general matchings on posets.

\begin{theorem}\label{thm:LS_theorem_premium_version}
	Let $X$ be a homologically admissible poset and let $\mathcal{M}$ be a Morse-Smale matching on $X$. Then 
	$$\HCcat(X)\leq \sum_{{\mathrm{basic \: sets} \: } \Lambda_i} \HCcat(\Lambda_i).$$
	In particular, given Morse matching $\mathcal{M}$ on $X$, then $\HCcat(X)$ is a lower bound for the number of critical elements of $\mathcal{M}$.
\end{theorem}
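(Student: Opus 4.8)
The plan is to convert both sides of the inequality into explicit numerical invariants and then feed the refined Morse–Bott inequalities of Theorem \ref{thm:morse_bott_inequalities_number_orbits} into a short telescoping argument. The key simplification is Corollary \ref{coro:exact_value_hccat}: for any space with finitely generated homology one has $\HCcat(X)=\sum_k b_k+2\sum_k\mu_k$. Since $X$ and every basic set $\Lambda_i$ is a finite poset, this formula applies to all the terms occurring in the statement, so the claim reduces to a comparison between the Betti and torsion ranks of $X$ and the numbers of critical points and prime closed $\mathcal{M}$-orbits of $\mathcal{M}$.

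First I would bound $\HCcat(\Lambda_i)$ for the two kinds of basic set permitted by the Morse–Smale hypothesis. If $\Lambda_i$ is a critical point it is a one-point space, so $\HCcat(\Lambda_i)=1$. If $\Lambda_i$ is a prime closed $\mathcal{M}$-orbit, then as a subposet its elements lie in two consecutive degrees and its order complex contains a cycle, so $\Lambda_i$ is connected with $H_1(\Lambda_i)\neq 0$; hence $b_0\ge 1$ and $b_1\ge 1$ and $\HCcat(\Lambda_i)\ge 2$ (with equality, $\HCcat(\Lambda_i)=2$, in the typical case where the orbit is a single embedded cycle, so that $|\mathcal{K}(\Lambda_i)|\simeq\mathbb{S}^1$ by Theorem \ref{thm:mccords_thms}). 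Writing $c_k$ for the number of critical points of index $k$ and $A_k$ for the number of prime closed $\mathcal{M}$-orbits of index $k$, this yields
\[
\sum_{\Lambda_i}\HCcat(\Lambda_i)\ \ge\ \sum_k c_k+2\sum_k A_k .
\]

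It then remains to show $\sum_k b_k+2\sum_k\mu_k\le \sum_k c_k+2\sum_k A_k$. Abbreviating the alternating partial sums by $C_k=\sum_{i=0}^k(-1)^i c_{k-i}$ and $B_k=\sum_{i=0}^k(-1)^i b_{k-i}$, Theorem \ref{thm:morse_bott_inequalities_number_orbits} reads $A_k+C_k\ge \mu_k+B_k$ for every $k$. Adding the instances at levels $k$ and $k-1$ and using the elementary telescoping identities $C_k+C_{k-1}=c_k$ and $B_k+B_{k-1}=b_k$ gives
\[
c_k+A_k+A_{k-1}\ \ge\ b_k+\mu_k+\mu_{k-1}
\]
for every $k\ge 0$ (terms of negative index being zero). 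Summing over all $k$ and using $\sum_k A_{k-1}=\sum_k A_k$ and $\sum_k\mu_{k-1}=\sum_k\mu_k$ produces exactly $\sum_k c_k+2\sum_k A_k\ge \sum_k b_k+2\sum_k\mu_k=\HCcat(X)$, which chains together with the previous display to give $\HCcat(X)\le\sum_{\Lambda_i}\HCcat(\Lambda_i)$. The final assertion is then immediate: for a Morse matching every basic set is a critical point, so $A_k=0$ and the right-hand side equals the number of critical elements of $\mathcal{M}$, whence $\HCcat(X)$ bounds that number from below.

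The main obstacle is recognizing that the \emph{torsion-sensitive} strengthening provided by Theorem \ref{thm:morse_bott_inequalities_number_orbits}, rather than a plain Forman-type inequality, is exactly what is needed: it is the presence of the $\mu_k$ terms on its right-hand side that, after the pairing of consecutive levels, reproduces the coefficient $2$ in front of $\sum_k\mu_k=\frac12\big(\HCcat(X)-\sum_k b_k\big)$; a version without torsion would be too weak to control $\HCcat(X)$. The remaining delicate point is purely bookkeeping—verifying the telescoping identities and the reindexing of the shifted sums—together with the (robust) lower bound $\HCcat(\Lambda_i)\ge 2$ for orbits, which only uses connectivity and nontriviality of $H_1$ and therefore does not require the orbit to be a simple, non-self-intersecting cycle.
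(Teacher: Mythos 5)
Your proof is correct, but it takes a different route from the paper's. The paper's argument is constructive: it perturbs $\mathcal{M}$ into a genuine Morse matching $\mathcal{M}^{*}$ by breaking each prime closed orbit into two critical points (so $m_p^{*}=c_p+A_p+A_{p-1}$), and then exhibits an explicit bounded subcomplex of $C_*(X)$ realising the bound, namely Minian's complex of $\phi$-invariant chains for the discrete flow $\phi=\Id+dV+Vd$ associated to $\mathcal{M}^{*}$; the inequality $\HCcat(C_*(X))\leq\sum_p m_p^{*}$ then comes straight from Definition \ref{def:homological_chain_category_for_chain_complexes}. You instead work entirely numerically: you replace $\HCcat$ everywhere by the closed formula of Corollary \ref{coro:exact_value_hccat} and deduce the inequality from Theorem \ref{thm:morse_bott_inequalities_number_orbits} by pairing consecutive levels and telescoping ($C_k+C_{k-1}=c_k$, $B_k+B_{k-1}=b_k$), which is essentially the alternative the paper itself sketches in the remark following its proof (there phrased via the Pitcher inequalities applied to $\mathcal{M}^{*}$). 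Your route is shorter given the results already established, and your treatment of the orbit term is actually more careful than the paper's: you only need $\HCcat(\Lambda_i)\geq 2$, which follows from connectivity and $b_1(\Lambda_i)\geq 1$ for the (possibly self-intersecting) underlying graph of a prime closed orbit, whereas the paper asserts equality $\HCcat(\Lambda_i)=2$, which strictly speaking requires the orbit to be a simple cycle. The trade-off is that your argument leans on Theorem \ref{thm:morse_bott_inequalities_number_orbits}, whose own proof already contains the perturbation construction, and it does not produce the explicit small quasi-isomorphic subcomplex that the paper's proof yields as a by-product.
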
 

\begin{proof}
	We will define a Morse matching $\mathcal{M}^{*}$ by means of perturbing $\mathcal{M}$. The idea is to replace each prime closed orbit by two critical points. This will be achieved by removing exactly one of the edges of the matching in each closed orbit. By repeating the technique used in the proof of \cite[Theorem 7.1]{Forman3}, we obtain a Morse matching $\mathcal{M}^{*}$ satisfying $m_p^{*}=c_p+A_p+A_{p-1}$, where $m_p^{*}$ denotes the number of critical points of index $p$ of the matching $\mathcal{M}^{*}$ (see Subsection \ref{subsec:Morse_Smale} for the definition of $A_p$). 
	
	Recall that $C_*(X)$ denotes the cellular chain complex of $X$. We define a map $V\co C_p(X)\to C_{p+1}(X)$ as follows:
	$$V(x)= \begin{cases} 
	-\epsilon(y,x)y, & \text{if there exists } y\in X \text{ with } (x,y)\in \mathcal{M}^{*},\\
	0, & \text{otherwise.} 
	\end{cases}$$

	Following the ideas of Minian \cite{Minian}, define the discrete flow operator $\phi \co C_p(X)\to C_{p}(X)$ as $\phi=\Id+dV+Vd$. The $\phi$-invariant chains 
	$$C_{p}^{\phi}(X)=\{c\in C_p(X)\co \phi(c)=c\}$$ form a well-defined subcomplex of $(C_{*}(X),d)$ \cite{Minian}. Moreover, the inclusion of  $(C_{*}^{\phi}(X),d)$ into $(C_{*}(X),d)$ induces isomorphisms in homology  and $C_{p}^{\phi}(X)$ is isomorphic to the free abelian group spanned by the critical $p$-elements of $X$ \cite{Minian}. As a consequence: \begin{equation}\label{eq:HCcat_sum_preliminar}
	\HCcat(C_*(X))\leq \sum_p m_p^{*}=\sum_p c_p+A_p+A_{p-1}.
	\end{equation}
	
	There are two kinds of basic sets for $\mathcal{M}$: critical points and disjoint closed $\mathcal{M}$-orbits. Observe that if $\Lambda_i$ is a critical point, then $\HCcat(\Lambda_i)=1$ while if  $\Lambda_i$ is a closed orbit, then $\HCcat(\Lambda_i)= 2$. So, from Equation (\ref{eq:HCcat_sum_preliminar}), it follows that:
	$$\HCcat(C_*(X))\leq \sum_{ \text{basic sets } \Lambda_i} \HCcat(\Lambda_i).$$
	Finally, observe that $\HCcat(X)=\HCcat(C_*(X))$ due to the isomorphism between cellular homology and singular homology for cellular posets (Theorem \ref{thm:cellular_homology_isomorphis_homology_cellular_posets}). 
\end{proof}
	
\begin{remark}
	In the proof of Theorem \ref{thm:LS_theorem_premium_version}, Equation (\ref{eq:HCcat_sum_preliminar}) could also be derived as a consequence of combining our Pitcher strengthening of Morse-inequalities \cite[Corollary 5.2.3]{SamQuiDaSVil} applied to the matching $\mathcal{M}^{*}$ with Corollary \ref{coro:exact_value_hccat}.
\end{remark}	

As a consequence of \cite[Theorem 3.3.6]{SamQuiDaSVil}, we obtain the following corollary:

\begin{cor}\label{coro:LS_theorem_free_version}
	Let $X$ be a homologically admissible poset and let $f\co X\to \R$ be a Morse function. Then $\HCcat(X)$ is a lower bound for the number of critical points of $f$.
\end{cor}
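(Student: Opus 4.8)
The plan is to bootstrap off the Lusternik--Schnirelmann inequality already established for matchings, Theorem \ref{thm:LS_theorem_premium_version}, by converting the Morse function $f$ into a matching with the same number of critical objects.

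First I would appeal to \cite[Theorem 3.3.6]{SamQuiDaSVil} to associate to the Morse function $f\co X\to\R$ a Morse matching $\mathcal{M}$ on $X$ whose critical elements correspond to the critical points of $f$. Since a Morse matching is by definition acyclic, its chain recurrent set $\mathcal{R}$ contains no $\mathcal{M}$-cycles; hence every basic set $\Lambda_i$ reduces to a single critical point. In particular $\mathcal{M}$ is trivially a Morse--Smale matching (the chain recurrent set consists only of critical points, and there are no prime closed $\mathcal{M}$-orbits), so Theorem \ref{thm:LS_theorem_premium_version} applies to it.

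Applying that theorem then gives
$$\HCcat(X)\leq \sum_{\text{basic sets }\Lambda_i}\HCcat(\Lambda_i).$$
Because each basic set $\Lambda_i$ is a single critical point, and a one-point subposet is acyclic and therefore satisfies $\HCcat(\Lambda_i)=1$ by the observation following Corollary \ref{coro:exact_value_hccat}, the right-hand side is exactly the number of critical elements of $\mathcal{M}$. By the correspondence from the first step this equals the number of critical points of $f$, which yields the claimed bound $\HCcat(X)\leq \#\crit(f)$.

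The main obstacle I expect is precisely the first step: confirming that the cited theorem genuinely produces a Morse matching whose critical elements match the critical points of $f$, rather than only relating the two counts loosely. Even if the correspondence only furnishes an inequality in the favorable direction, namely $\#C_{\mathcal{M}}\leq \#\crit(f)$, the argument still closes, since then $\HCcat(X)\leq \#C_{\mathcal{M}}\leq \#\crit(f)$; but one must verify the direction of this inequality carefully. The remaining steps are purely formal once Theorem \ref{thm:LS_theorem_premium_version} and the value $\HCcat(\text{point})=1$ are in hand.
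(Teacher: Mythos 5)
Your proof is correct and follows essentially the same route as the paper, which derives the corollary directly from \cite[Theorem 3.3.6]{SamQuiDaSVil} (the correspondence between Morse functions on posets and Morse matchings with the same critical elements) combined with Theorem \ref{thm:LS_theorem_premium_version}. Your write-up simply makes explicit the steps the paper leaves implicit, including the observation that a Morse matching is a Morse--Smale matching with no closed orbits and that $\HCcat$ of a point is $1$.
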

	
\begin{remark}
	Let $K$ be a simplicial complex or, more generally, a regular CW-complex $K$. Recall that its face poset $\Delta(K)$ is a homologically admissible poset. Moreover, the chain complex $C_\bullet(\Delta(K),d)$ where $d$ is the cellular boundary operator coincides with the chain complex $C_\bullet(K,\partial)$ where $\partial$ is the cellular -or simplicial in case $K$ is a simplicial complex- boundary operator. Therefore $\HCcat(\Delta(K))=\HCcat(K)$. Hence, we have in particular a simplicial homological Lusternik-Schnirelmann Theorem.
\end{remark}

%
%

\bibliographystyle{abbrvnat}
\bibliography{biblio}

\end{document}